\documentclass[12pt]{article}
\usepackage[utf8]{inputenc}
\usepackage[T1]{fontenc}
\usepackage{tgtermes}
\usepackage{fullpage}

\usepackage{amsmath}
\usepackage{amssymb}
\usepackage{amsthm}
\usepackage{amsfonts}
\usepackage{xcolor}
\usepackage{mathtools}
\usepackage{tikz}
\usetikzlibrary{arrows.meta}

\setcounter{MaxMatrixCols}{20}
\usepackage{ytableau}

\usepackage{hyperref}
\hypersetup{
    colorlinks = true,
    linkcolor = black,
    urlcolor = blue,
    citecolor = blue
    }
\usepackage{cleveref}

\newcommand{\NN}{\mathbb N}
\newcommand{\ZZ}{\mathbb Z}

\newcommand{\RR}{\mathbb R}

\DeclareMathOperator{\mO}{\mathcal{O}}
\DeclareMathOperator{\mC}{\mathcal{C}}
\DeclareMathOperator{\id}{id}
\DeclareMathOperator{\conv}{conv}

\DeclareMathOperator{\GT}{GT}
\DeclareMathOperator{\SL}{\mathfrak s \mathfrak l}
\DeclareMathOperator{\diags}{Diag}

\theoremstyle{plain}
\newtheorem{theorem}{Theorem}[section]
\newtheorem{lemma}[theorem]{Lemma}
\newtheorem{corollary}[theorem]{Corollary}
\newtheorem{prop}[theorem]{Proposition}

\newtheorem{question}[theorem]{Question}

\theoremstyle{definition}
\newtheorem{definition}[theorem]{Definition}
\newtheorem{remark}[theorem]{Remark}
\newtheorem*{notation}{Notation}
\newtheorem{example}[theorem]{Example}
\newtheorem{setup}[theorem]{Setup}

\title{Restricted Chain-Order Polytopes\\
via Combinatorial Mutations}
\author{Oliver Clarke, Akihiro Higashitani, and Francesca Zaffalon}
\date{October 2022}

\begin{document}

\maketitle
\begin{abstract}
    We study restricted chain-order polytopes associated to Young diagrams using combinatorial mutations.
    These polytopes are obtained by intersecting chain-order polytopes with certain hyperplanes. 
    The family of chain-order polytopes associated to a poset interpolate between the order and chain polytopes of the poset. Each such polytope retains properties of the order and chain polytope; for example its Ehrhart polynomial.
    For a fixed Young diagram, we show that all restricted chain-order polytopes are related by a sequence of combinatorial mutations.
    Since the property of giving rise to the period collapse phenomenon is invariant under combinatorial mutations, we provide a large class of rational polytopes that give rise to period collapse.
\end{abstract}


\section{Introduction}

Let $P \subset \RR^N$ be a $d$-dimensional rational polytope. The lattice point counting function $L_P(n) := \lvert nP \cap \ZZ^N \rvert$ is a \emph{quasi-polynomial} in $n$ of degree $d$, that is, a polynomial $L_P(n) = c_d(n)n^d + c_{d-1}(n)n^{d-1} + \dots + c_1(n)n + c_0(n)$ whose coefficients $c_i(n)$ are periodic in $n$ \cite{beck2007computing}. The least common multiple of the periods of $c_i$ is called the \emph{period of $P$}. Generically, the period of $P$ is equal to its \emph{denominator}, which is the smallest positive integer $m$ such that all vertices of the $m$th dilate of $P$ lie in $\ZZ^N$. By a well-known theorem of Ehrhart, the period divides the denominator. So all lattice polytopes are polytopes with period one, however the converse is false. We say that $P$ has \emph{period collapse} if its period is not equal to its denominator. 
It was proved in \cite[Theorem 2.2]{MW_2005} that, for all positive integers $d$, $D$, and $s$ with $d \geq 2$ such that $s$ divides $D$, there exists a $d$-dimensional rational polytope with its denominator $D$ and period $s$. 
Since then, period collapse has become one of the main topics in Ehrhart theory. See, e.g, \cite{BSW_2008, HM_2008, MW_2005}. 

Our method to study polytopes involves \emph{combinatorial mutations} \cite{Akhtar2012, higashitani2020two}, which were originally defined in the study of mirror symmetry for Fano varieties. There are two complementary perspectives on combinatorial mutations that are related by taking the dual. Our perspective is derived from the so-called $M$-lattice. That is, we consider a combinatorial mutation to be a piece-wise linear map. In this setting, polytopes that are related by a combinatorial mutation have the same Ehrhart polynomial. See \cite[Proposition~4]{Akhtar2012} and \Cref{prop: mutation equiv same Ehrhart}. In particular, if $P$ has period collapse, then all polytopes that are \emph{mutation equivalent} to $P$ also have period collapse. 

Despite preserving their Ehrhart polynomials, many other salient features of polytopes are not invariant under combinatorial mutations. For example, the denominator and the number of vertices may change after a combinatorial mutation.

\begin{example}\label{example: intro rational to lattice mutation}
Let $P \subset \RR^2$ be the convex hull of the rational points $(1,0)$, $(0,-1)$, $(-1,0)$ and $(0, \frac 12)$. Let $\varphi : \RR^2 \rightarrow \RR^2$ be the piece-wise linear map given by
\[
\varphi(x,y) = \begin{cases}
(x,y) & \text{if } x \le 0, \\
(x, x+y) & \text{if } x \ge 0.
\end{cases}
\]
The map $\varphi$ is an example of a \emph{tropical map $\varphi_{w,F}$} with respect to the data $w = (0,1)$ and $F = \conv\{(0,0), (-1,0)\}$. See \Cref{sec: prelim combinatorial mutations}. The image of $P$ under $\varphi$ is the lattice simplex $Q$ with vertices $(-1,0)$, $(0,-1)$ and $(1,1)$. See \Cref{fig: intro mutation example}.
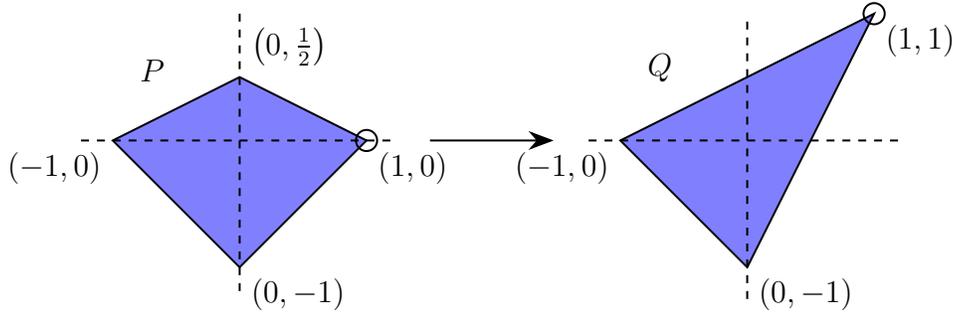
\begin{figure}[h]
    \centering
    \tikzset{every picture/.style={line width=0.75pt}}
    \begin{tikzpicture}[x = 0.6pt, y = 0.6pt, yscale = -1, xscale = 1] 
    
    \draw [fill={blue!50}] 
    (160,220) node [anchor=north west] {$( 0,-1)$} -- 
    (240,140) node [anchor=north west] {$( 1,0)$} --
    (160,100) node [anchor=south west] {$\left( 0, \frac 12\right)$} -- 
    (80,140) node [anchor=north east] {$( -1,0)$} -- 
    cycle;
    \draw  [fill={blue!50}] 
    (400,140) node [anchor=north east] {$( -1,0)$} -- 
    (480,220) node [anchor=north west] {$( 0,-1)$} -- 
    (560,60) node [anchor=north west] {$( 1,1)$} -- 
    cycle;
    
    \draw [dashed]  (160,60) -- (160,240);
    \draw [dashed]  (480,240) -- (480,60);
    \draw [dashed]  (60,140) -- (260,140);
    \draw [dashed]  (380,140) -- (580,140);
    
    \draw [-{Stealth[scale=1.5]}] (280,140) -- (358,140);
    
    \draw (240, 140) circle (4pt);
    \draw (560, 60) circle (4pt);
    
    \draw (90,110) node [anchor=south west] {$P$};
    \draw (410,110) node [anchor=south west] {$Q$};
    \end{tikzpicture}
    \caption{The combinatorial mutation in \Cref{example: intro rational to lattice mutation}. The map $\varphi$ acts by the identity on the half-space $\{(x,y) \mid x \le 0\}$ and acts by a shear in the direction $(0,1)$ on $\{(x,y) \mid x \ge 0\}$. The circled vertex $(1,0) \in P$ is mapped to the circled vertex $(1,1) \in Q$ by $\varphi$.}
    \label{fig: intro mutation example}
\end{figure}
Since $P$ and $Q$ are mutation equivalent, they have the same Ehrhart polynomial. Moreover, the polytope $Q$ is a lattice polytope, so we deduce that $P$ has period collapse.
\end{example}

In this paper we focus on a class of polytopes related to poset, Gelfand-Tsetlin, and Birkhoff polytopes \cite{andersson2022restricted, ardilla2011GTandFFLVPolytopes, Gelfand1950finite, MR824105}. The Gelfand-Tsetlin polytopes $\GT_{\alpha, \beta}$ arise from representation theory, where their lattice points form a basis for the corresponding irreducible representation of the Lie algebra $\SL_n$. Using the perspective from representation theory, it is known that the Ehrhart quasi-polynomials of such Gelfand-Tsetlin polytopes are indeed polynomials \cite{kirillov2001ubiquity}. The Gelfand-Tsetlin polytope is an example of the \emph{order polytope} of a poset intersected with certain affine-hyperplanes. The order and chain polytopes, originally defined by Stanley \cite{MR824105}, are polytopes associated to a poset. The chain-order polytopes are polytopes that interpolate between the order and chain polytopes via a sequence of piece-wise linear maps called \emph{transfer maps} \cite[Definition 3.1]{MR824105}. It was shown that these piece-wise linear maps, similarly to combinatorial mutations, preserve the Ehrhart polynomial. Note that the chain-order polytopes were originally defined for \emph{marked} posets (see \cite{FF_2016}), which is a vast generalisation. In this paper, we specialise this definition so that our notions are concurrent with usual poset polytopes.

Recently, in \cite{andersson2022restricted}, it was shown that the restricted order and restricted chain polytopes that correspond to certain Gelfand-Tsetlin and Birkhoff polytopes, respectively, are related by a sequence of piece-wise linear maps. These piece-wise linear maps were originally defined by Pak \cite{pak2001hook}, and is almost equivalent to the Robinson–Schensted–Knuth (RSK) correspondence \cite{schensted_1961}. However, as noted in \cite[Section~5]{andersson2022restricted}, the transfer maps do not commute with taking intersections with the hyperplanes that define the restricted order polytope and restricted chain polytope. In this paper we resolve this problem by decomposing Pak's piece-wise linear map into smaller pieces, which are combinatorial mutations. See \Cref{proposition: Pak map as tropical maps}. We define the restricted analogues of chain-order polytopes in \Cref{def: restricted chain-order polytope} and prove that these polytopes are mutation equivalent, providing a complete answer to \cite[Remark~6.4]{andersson2022restricted}.

\medskip
\noindent \textbf{Outline.}
In \Cref{sec: preliminaries} we introduce the necessary preliminaries to state our main results. In particular, we define combinatorial mutations in \Cref{sec: prelim combinatorial mutations}, recall the definitions of polytopes from posets and Young diagrams in \Cref{sec: prelim posets and polytopes}, and fix our main setup (\Cref{setup: restricted chain-order polytope}) in \Cref{sec: prelim restricted chain-order polytopes}. In \Cref{sec: main results}, we describe our main results. Our main theorem (\Cref{thm : restricted order and chain mutation equivalent}) shows mutation equivalence of the restricted order and chain polytopes. We describe a sequence of mutations that relate these polytopes and define our main tool for proving \Cref{thm : restricted order and chain mutation equivalent}: the \emph{restricted chain-order polytopes} in \Cref{def: restricted chain-order polytope}. In \Cref{sec: proofs}, we prove our main results in two steps using Lemmas~\ref{lemma : chi_r bijection restricted polytopes} and \ref{lemma: tropical map is a mutation}. In \Cref{sec: computations} we provide some corollaries of our results, together with some computations and further questions. 

\medskip

\noindent{\bf Acknowledgements.}~The authors would like to thank Sam Hopkins for pointing out possible connections to their work. O.C. is an overseas researcher under Postdoctoral Fellowship of Japan Society for the Promotion of Science (JSPS). A.H. is partially supported by JSPS Fostering Joint International Research (B) 21KK0043 and JSPS Grant-in-Aid for Scientific Research (C) 20K03513. F.Z. is partially supported by grants G0F5921N (Odysseus programme), G023721N from the Research Foundation - Flanders (FWO) and FWO fundamental research fellowship (1189923N). 

\section{Preliminaries}\label{sec: preliminaries}

In this section, we recall the main definitions and fix our notation. See \Cref{setup: restricted chain-order polytope}. We use this notation to explain our main results in \Cref{sec: main results}.

\subsection{Combinatorial mutations}\label{sec: prelim combinatorial mutations}

Fix a natural number $N$. For each $x$ and $y$ in $\RR^N$, denote by $x \cdot y$ the usual dot product of $x$ and $y$. We recall the definition of combinatorial mutation of a polytope $P \subseteq \RR^N$ in terms of a piece-wise linear map. Let $w \in \ZZ^N$ be a primitive lattice point and $F \subset w^\perp \subset \RR^N$ a lattice polytope, where $w^\perp=\{x \in \RR^N \mid x \cdot w =0\}$. We define the \emph{tropical map}
\[ 
\varphi_{w,F} : \RR^N \to \RR^N, \quad x \mapsto x-x_{\min} w, 
\]
where $x_{\min} = \min\{ x \cdot f \mid f \in F\}$. Each vertex $v \in F$ defines a \textit{region of linearity} of $\varphi_{w, F}$ given by $U = \{x \in \RR^N \mid x_{\min} = x \cdot v \}$. The restriction $\varphi_{w,F}|_U : x \mapsto x - (x \cdot v)w$ is a unimodular map given by a shear.

Let $P \subset \RR^N$ be a polytope. If $\varphi_{w,F}(P)$ is convex, we say that $\varphi_{w,F}(P)$ is a \emph{combinatorial mutation} of $P$. Two polytopes $P$ and $Q$ in $\RR^N$ are said to be \emph{combinatorial-mutation equivalent}, or simply \emph{mutation equivalent}, if there exists a sequence of combinatorial mutations
\[
P_1 = \varphi_{w_1, F_1}(P_0), \ 
P_2 = \varphi_{w_2, F_2}(P_1), \dots , 
P_k = \varphi_{w_i, F_i}(P_{k-1}) \]
such that $P_0 = P$ and $P_k = Q$. We refer to the polytopes $P_2, P_3, \dots, P_{k-1}$ as the \emph{intermediate polytopes} of the sequence of combinatorial mutations.

\begin{remark}
Combinatorial mutations arise in the context of mirror symmetry in the study of the classification of Fano varieties \cite{Akhtar2012}. They have also been shown to connect families of Newton-Okounkov bodies for partial flag varieties \cite{clarke2021mutationsBlockDiagonal, clarke2022mutationsGTandFFLV, clarke2022toric} and adjacent tropical cones \cite{escobar2019wall}.
\end{remark}

The \emph{Ehrhart series} is the generating function of the number of lattice points $\lvert nP \cap \ZZ^N \rvert$ in the $n$th dilate of $P$. We denote it
\[
E_P(t) := \sum_{n \ge 0} \ \lvert nP \cap \ZZ^N \rvert \ t^n.
\]

\begin{prop}[{\cite[Proposition~4]{Akhtar2012}}]\label{prop: mutation equiv same Ehrhart}
Mutation equivalent polytopes have the same Ehrhart series.
\end{prop}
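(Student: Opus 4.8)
The plan is to reduce to the case of a single combinatorial mutation and then show that the tropical map $\varphi_{w,F}$ induces a bijection between the lattice points of $nP$ and the lattice points of $n\varphi_{w,F}(P)$ for every $n \ge 0$. Since mutation equivalence is defined by a finite chain $P_0, P_1, \dots, P_k$ in which each $P_{i+1}$ is a combinatorial mutation of $P_i$, it suffices to prove that a single mutation preserves each coefficient $\lvert nP \cap \ZZ^N \rvert$ of the Ehrhart series; the general statement then follows by transitivity. So from now on I would fix a primitive $w \in \ZZ^N$, a lattice polytope $F \subset w^\perp$, and write $Q = \varphi_{w,F}(P)$.

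The first key step is to record that $\varphi_{w,F}$ restricts to a bijection $\ZZ^N \to \ZZ^N$. On the region of linearity $U$ attached to a vertex $v \in F$, the map is the linear transformation $x \mapsto x - (x \cdot v)w = (I - w v^\top)x$. Because $v, w \in \ZZ^N$ this is an integer matrix, and $\det(I - w v^\top) = 1 - v \cdot w = 1$ since $v \in F \subset w^\perp$; hence each local piece is unimodular and maps $\ZZ^N$ bijectively to $\ZZ^N$. Globally, the pieces glue along the common hyperplanes $\{x \cdot v = x \cdot v'\}$ because $\varphi_{w,F}$ is continuous, so $\varphi_{w,F}(\ZZ^N) \subseteq \ZZ^N$. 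To upgrade this to a bijection I would exhibit an explicit inverse: for every $f \in F$ one has $\varphi_{w,F}(x) \cdot f = x \cdot f$ (using $f \cdot w = 0$), so the minimum $x_{\min} = \min\{x \cdot f \mid f \in F\}$ is recovered from $\varphi_{w,F}(x)$, and a short computation shows $\varphi_{-w,F} \circ \varphi_{w,F} = \id$. Thus $\varphi_{w,F}$ is a piecewise-linear bijection of $\RR^N$ whose inverse $\varphi_{-w,F}$ is again of the same form, and both restrict to bijections of $\ZZ^N$.

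The second step is the homogeneity observation that $\varphi_{w,F}$ commutes with nonnegative dilation: for $\lambda \ge 0$ we have $(\lambda x)_{\min} = \lambda x_{\min}$, whence $\varphi_{w,F}(\lambda x) = \lambda\,\varphi_{w,F}(x)$. Consequently $nQ = n\,\varphi_{w,F}(P) = \varphi_{w,F}(nP)$ for every $n \ge 0$, so $\varphi_{w,F}$ maps $nP$ onto $nQ$ as a set while simultaneously restricting to a bijection of $\ZZ^N$. Intersecting with $\ZZ^N$ therefore yields a bijection $nP \cap \ZZ^N \to nQ \cap \ZZ^N$, giving $\lvert nP \cap \ZZ^N \rvert = \lvert nQ \cap \ZZ^N \rvert$ for all $n$ and hence $E_P(t) = E_Q(t)$.

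The genuine content, and the step I expect to require the most care, is the bijection claim on $\ZZ^N$: one must check that the unimodularity of each region of linearity is exactly guaranteed by the constraint $F \subset w^\perp$ (which forces $v \cdot w = 0$ at every vertex $v$), and that the invariant $x \cdot f$ for $f \in F$ is what makes $\varphi_{-w,F}$ a genuine two-sided inverse rather than merely a left inverse on one region. Once the bijection and the degree-one homogeneity are in hand, the equality of Ehrhart series is immediate, and an induction on the length of the mutation chain completes the proof of the proposition.
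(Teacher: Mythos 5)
Your proof is correct, but it is organized around a genuinely different key step than the paper's. The paper decomposes $\RR^N$ into the relative interiors $\sigma^\circ$ of the cones of the fan $\Sigma$ whose maximal cones are the regions of linearity of $\varphi_{w,F}$, and then sums Ehrhart series over the half-open pieces: $E_{\varphi_{w,F}(P)}(t) = \sum_{\sigma^\circ} E_{\varphi_{w,F}(P \cap \sigma^\circ)}(t) = \sum_{\sigma^\circ} E_{P \cap \sigma^\circ}(t) = E_P(t)$, using that each restriction of $\varphi_{w,F}$ is unimodular. You instead prove two global statements: that $\varphi_{w,F}$ is a bijection of $\RR^N$ with two-sided inverse $\varphi_{-w,F}$, both of which map $\ZZ^N$ into $\ZZ^N$ (each linear piece $I - wv^\top$ being an integer matrix with $\det(I - wv^\top) = 1 - v\cdot w = 1$), and that $\varphi_{w,F}$ is positively homogeneous, so $n\,\varphi_{w,F}(P) = \varphi_{w,F}(nP)$; intersecting with $\ZZ^N$ then gives $\lvert nP \cap \ZZ^N \rvert = \lvert n\varphi_{w,F}(P) \cap \ZZ^N \rvert$ for every $n$. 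Both arguments rest on the same two underlying facts (integrality of the linear pieces, which holds because $F$ is a lattice polytope in $w^\perp$, and conicality of the regions of linearity), but your packaging buys something real: exhibiting $\varphi_{-w,F}$ as an inverse establishes injectivity of $\varphi_{w,F}$, which the paper's proof uses only implicitly---without injectivity the images $\varphi_{w,F}(P \cap \sigma^\circ)$ could overlap and the paper's first equality would over-count---and your dilation identity is the explicit substitute for the paper's implicit use of $nP \cap \sigma^\circ = n(P \cap \sigma^\circ)$. Conversely, the paper's piece-by-piece formulation is the more flexible statement when one wants to track individual regions. One small remark: your appeal to continuity and gluing along hyperplanes is not needed for the inclusion $\varphi_{w,F}(\ZZ^N) \subseteq \ZZ^N$; it suffices that every lattice point lies in some region of linearity on which the map is given by an integer matrix.
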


For clarity we provide our own proof of this proposition. Note that we do not assume that the dual polytopes are lattice polytopes.

\begin{proof}
Suppose that $\varphi_{w,F}(P)$ is a combinatorial mutation of $P$. The regions of linearity of $\varphi_{w,F}$ are the maximal cones of a polyhedral fan $\Sigma$. Let $\Sigma^\circ = \{\sigma^\circ \mid \sigma \in \Sigma\}$ be the collection of relative interiors of cones in $\Sigma$. Note that $\Sigma$ is a complete fan, so $\RR^N = \bigcup_{\sigma^\circ \in \Sigma} \sigma^\circ$. Since $\varphi_{w,F}$ is piece-wise unimodular, for each $\sigma^\circ \in \Sigma^\circ$, we have that $\varphi_{w,F}(P \cap \sigma^\circ)$ and $P \cap \sigma^\circ$ have the same Ehrhart series. So, we have
\[
E_{\varphi_{w,F}(P)}(t) = 
\sum_{\sigma^\circ \in \Sigma^\circ} E_{\varphi_{w,F}(P \cap \sigma^\circ)}(t) =
\sum_{\sigma^\circ \in \Sigma^\circ} E_{P \cap \sigma^\circ}(t) =
E_{P}(t).
\]
\end{proof}

\subsection{Posets and polytopes}\label{sec: prelim posets and polytopes}

\newcommand{\posetSymbol}{\mathcal P}
Let $(\posetSymbol, <)$ be a partially ordered set, which we usually denote as $\posetSymbol$. A subset $U \subseteq \posetSymbol$ is called an \textit{up-set} if for all $x \in U$ and $y \in \posetSymbol$, we have that $x < y$ implies that $y \in U$. A subset $D \subseteq \posetSymbol$ is a \textit{down-set} if its complement $\posetSymbol \backslash D$ is an up-set. A subset $S \subseteq \posetSymbol$ is called a \textit{chain} if each pair of elements in $S$ is comparable. A subset $S \subseteq \posetSymbol$ is an \textit{antichain} if no pair of elements in $S$ is comparable. Given a subset $S \subseteq \posetSymbol$, the collections of minimal and maximal elements are respectively $\min(S) = \{s \in S \mid x \nless s \text{ for all } x \in S\}$ and $\max(S) = \{s \in S \mid x \ngtr s \text{ for all } x \in S \}$.

We denote by $\NN = \{1,2,\dots\}$ the set of natural numbers and the set $[n] = \{ 1, 2, \dots, n\}$ for any $n \in \NN$. We equip $\NN^2$ with the component-wise partial order: $(a,b) \le (c,d)$ if $a \le c$ and $b \le d$. A \textit{Young diagram} $\lambda \subseteq \NN^2$ is a finite down-set, which we take to be a sub-poset of $\NN^2$. Young diagrams are often defined as partitions of natural numbers. Explicitly, a partition $(\lambda_1 \ge \lambda_2 \ge \dots \ge \lambda_k)$ of $n \in \NN$ is naturally associated to the down-set 
\[
\lambda = \{(1,1),(1,2), \dots, (1,\lambda_1), \ (2,1), (2,2), \dots, (2,\lambda_2), \dots \ , (k,1), \dots , (k, \lambda_k)\} \subseteq \NN^2.
\]
Typically, a Young diagram $\lambda$ is depicted as a collection of boxes, with $(1,1)$ located in the top-left, $(1,2)$ to the immediate right of $(1,1)$, $(2,1)$ immediately below $(1,1)$, and so on. We say that a box $r \in \lambda$ is a \emph{corner} if $r \in \max(\lambda)$ is a maximal element. 

\begin{example}\label{example : Young diagram}
The Young diagram given by the partition $(4,4,3)$, is represented via the following collection of boxes.
\[
\ydiagram [*(cyan!50)]{0,3+1,2+1}
*[*(white) ]{4,4,3}
\]
The Young diagram has two corners: $(2,4)$ and $(3,3)$, which are the shaded boxes above.
\end{example}

We write $\RR^\lambda$ for the real vector space 
with distinguished basis $\{e_p \mid p \in \lambda\}$. Given $x, y \in \RR^\lambda$ we write $x \cdot y = \sum_{p \in \lambda} x_p y_p$ for the standard dot-product. For all $x \in \RR^\lambda$ and $p \notin \lambda$, we take the convention that $x_p = 0 \in \RR$ and $e_p = 0 \in \RR^\lambda$. 

We recall the definition of two polytopes classically associated to a poset. 
The \emph{order polytope} of the Young diagram $\lambda$ is the polytope
\[ 
\mO(\lambda) = \left\{ x \in \RR^{\lambda} \mid  
0 \le x_p \le x_q  \le 1 \text{ for all } p\leq q \text{ in } \lambda \right\}. 
\]
The \emph{chain polytope} of $\lambda$ is the polytope
\[ 
\mC(\lambda) = \left\{ x \in \RR^{\lambda} \mid
0 \le x_{p_1}+x_{p_2} + \dots + x_{p_k} \le 1 \text{ for all } p_1 < p_2 < \dots < p_k \text{ in } \lambda \right\}. 
\]

Each point $x \in \RR_{\ge 0}^\lambda$ can be thought of as a non-negative \textit{filling} of the Young diagram, i.e.~writing the value $x_p$ in the box $p \in \lambda$. A point $x \in \RR_{\ge 0}^\lambda$ lies in the $k$th dilate $k\mO(\lambda)$ if and only if the values in each box do not exceed $k$ and increase when moving down and to the right. Similarly, a point $x \in \RR_{\ge 0}^\lambda$ lies in $k\mC(\lambda)$ if and only if the sum of values along any path in $x$ that starts at $(1,1)$ and moves down and to the right is at most $k$. The vertices of these polytopes have the following descriptions.

\begin{prop}[{\cite[Corollary~1.3 and Theorem~2.2]{MR824105}}]
Fix a Young diagram $\lambda$ and for each subset $S \subseteq \lambda$ define the characteristic vector $\chi(S) \in \RR^\lambda$ by $\chi(S)_p = 1$ if $p \in S$ and $\chi(S)_p = 0$ if $p \notin S$. The vertices of $\mO(\lambda)$ and $\mC(\lambda)$ are \[
V(\mO(\lambda)) = \{\chi(S) \mid S \text{ an up-set of } \lambda \}
\quad \text{ and } \quad
V(\mC(\lambda)) = \{\chi(S) \mid S \text{ an antichain of } \lambda \}.
\]
\end{prop}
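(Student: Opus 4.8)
The plan is to handle both polytopes through one common observation. Both $\mO(\lambda)$ and $\mC(\lambda)$ are contained in the unit cube $[0,1]^\lambda$: for $\mO(\lambda)$ the inequalities $0 \le x_p \le 1$ are among the defining ones, and for $\mC(\lambda)$ each singleton $\{p\}$ is a chain, so $0 \le x_p \le 1$. The vertices of $[0,1]^\lambda$ are exactly the $\{0,1\}$-vectors, and a vertex of the cube that happens to lie in a subpolytope is still a vertex of that subpolytope. Hence every characteristic vector $\chi(S)$ belonging to $\mO(\lambda)$ or $\mC(\lambda)$ is automatically a vertex of that polytope, and it remains only to (i) identify which $\chi(S)$ are feasible and (ii) show that every point of each polytope is a convex combination of feasible characteristic vectors of the stated type, so that no other vertices occur.

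For step (i) I would check the feasibility dictionary. If $S$ is an up-set and $p \le q$, then $p \in S$ forces $q \in S$, so $\chi(S)_p \le \chi(S)_q$ and $\chi(S) \in \mO(\lambda)$; conversely if $\chi(S) \in \mO(\lambda)$ then $p \le q$ with $p \in S$ gives $\chi(S)_q = 1$, so $q \in S$ and $S$ is an up-set. Likewise, if $A$ is an antichain then any chain contains at most one element of $A$, so every chain-sum of $\chi(A)$ is at most $1$ and $\chi(A) \in \mC(\lambda)$; conversely if $A$ is not an antichain it contains comparable $p < q$, and the chain $p < q$ yields $\chi(A)_p + \chi(A)_q = 2 > 1$, so $\chi(A) \notin \mC(\lambda)$. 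Together with the cube observation this already shows that up-set indicators are vertices of $\mO(\lambda)$ and antichain indicators are vertices of $\mC(\lambda)$.

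For step (ii) in the order case I would use the threshold (layer-cake) decomposition. Given $x \in \mO(\lambda)$, let $0 = a_0 < a_1 < \dots < a_m$ be $0$ together with the distinct values taken by $x$, and put $S_i = \{p \mid x_p \ge a_i\}$. Since $x$ is order-preserving each $S_i$ is an up-set, and these sets are nested, $S_1 \supseteq S_2 \supseteq \dots \supseteq S_m$. A direct check gives $x = \sum_{i=1}^m (a_i - a_{i-1})\chi(S_i) + (1-a_m)\chi(\emptyset)$, a convex combination of up-set indicators. Thus $\mO(\lambda) = \conv\{\chi(S) \mid S \text{ an up-set}\}$ and there are no further vertices.

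The main obstacle is step (ii) for the chain polytope, where no direct thresholding of $x$ is available. Here I would route through Stanley's transfer map and its inverse, realized concretely. Given $x \in \mC(\lambda)$, set $g(p) = \max\{x_{p_1} + \dots + x_{p_k} \mid p_1 < \dots < p_k = p\}$, the maximal chain-sum ending at $p$; one checks that $g \in \mO(\lambda)$ (it is order-preserving and bounded by $1$) and that $x_p = g(p) - \max_{q \lessdot p} g(q)$ recovers $x$ from $g$ (with the maximum read as $0$ when $p$ is minimal). Applying the threshold decomposition above to $g$ produces nested up-sets $S_1 \supseteq \dots \supseteq S_m$ with $g = \sum_i c_i \chi(S_i)$. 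The crux is then verifying that, because the $S_i$ are nested, the transfer map acts affinely on this combination, so that $x = \sum_i c_i \chi(\min S_i)$; concretely, writing $\ell(p) = \max\{i \mid p \in S_i\}$ and $a_j = \sum_{i \le j} c_i$, both sides equal $a_{\ell(p)} - a_{\max_{q \lessdot p}\ell(q)}$ at each box $p$. Since each $\min(S_i)$ is an antichain, this exhibits $x$ as a convex combination of antichain indicators, giving $\mC(\lambda) = \conv\{\chi(A) \mid A \text{ an antichain}\}$. The nestedness compatibility in this last step is the one computation I expect to require care; everything else is a routine feasibility or bookkeeping check.
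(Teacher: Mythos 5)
Your proposal is correct; I checked the one step you flagged as delicate and it holds. Bear in mind, though, what you are being compared against: the paper does not prove this proposition at all --- it is quoted from Stanley's paper (the cited Corollary~1.3 and Theorem~2.2), and the transfer map you invoke is introduced there only \emph{after} those results, in Stanley's Section~3. So your route genuinely differs from the source on the chain-polytope half. For $\mO(\lambda)$, your argument (cube-vertex observation, feasibility dictionary, layer-cake decomposition into nested up-sets) is the standard one. For $\mC(\lambda)$, Stanley argues directly on the chain polytope, with no transfer map available at that point; you instead pull the layer-cake decomposition of $g=\phi^{-1}(x)$, where $g_p$ is the maximal chain-sum ending at $p$, through the transfer map $\phi(g)_p = g_p - \max_{q\lessdot p} g_q$ (read as $g_p$ at minimal $p$). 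Your key identity is right: since the $S_i$ are nested up-sets, $\ell(p)=\max\{i \mid p\in S_i\}$ is order-preserving, hence $\max_{q<p}\ell(q)=\max_{q\lessdot p}\ell(q)$, and both $\phi(g)_p$ and $\sum_i c_i\,\chi(\min S_i)_p$ evaluate to $a_{\ell(p)}-a_{\max_{q\lessdot p}\ell(q)}$; combined with $\phi(\chi(S))=\chi(\min S)$ for up-sets $S$ and with $\phi(g)=x$, this exhibits every $x\in\mC(\lambda)$ as a convex combination of antichain indicators, and the cube argument upgrades the resulting hull descriptions to the exact vertex sets. What your route buys is economy and thematic fit: one decomposition drives both halves, and along the way you re-prove that the transfer map is a bijection $\mO(\lambda)\to\mC(\lambda)$ (Stanley's Theorem~3.2), which is exactly the kind of piecewise-linear map (Pak's $\xi_\lambda$) that this paper later decomposes into combinatorial mutations. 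What Stanley's route buys is independence: the vertex description of $\mC(\lambda)$ is obtained without presupposing any properties of the transfer map, so it can then serve as input to the study of that map rather than the other way around.
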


The order polytope and chain polytope are mutation equivalent \cite{higashitani2020two}. The sequence of mutations can be realised by a decomposition of a piece-wise linear map called the \textit{transfer map}, introduced in \cite{MR824105}, which interpolates between the polytopes. Intermediate polytopes of this sequence of mutations are given by the so-called chain-order polytope.

\begin{definition}
Let $\lambda$ be a Young diagram and let $C \subsetneq \lambda$ be a proper up-set. The \emph{chain-order polytope} of $\lambda$ with respect to $C$ is the polytope
\[ 
\mO_C(\lambda) = \left\{ x \in \RR^\lambda \left| \ \begin{matrix*}[l] 
0 \leq x_p \leq 1 & \text{ for all } p \in \lambda,\\
x_p \leq x_q & \text{ for all } p \leq q \text{ in } \lambda \backslash C, \\
x_p+ x_{q_1} + \dots + x_{q_n} \leq 1 & \text{ for all } p \in \lambda \backslash C \text{ and } q_1, \dots, q_n \in C \\
& \qquad \text{ such that } p < q_1 < \dots < q_n. \end{matrix*}  \right. \right\}.
\]
If $C = \emptyset$, then $\mO_C(\lambda) = \mO(\lambda)$ coincides with the order polytope. If $C= \lambda \backslash \{(1,1)\}$, then $\mO_C(\lambda) = \mC(\lambda)$ coincides with the chain polytope. For $C = \lambda$, we define $\mO_C(\lambda) = \mC(\lambda)$ to be the chain polytope. 
\end{definition}

\begin{example}
Consider the Young diagram $\lambda$ given by the partition $(3,2)$ and the up-set $C = \{(1,2),(1,3), (2,2)\}$ corresponding to the highlighted boxes below. The chain-order polytope of $\lambda$ with respect to $C$ has vertices:
\[ \begin{ytableau} 0 & *(yellow!50) 0 & *(yellow!50) 0 \\
0 & *(yellow!50) 0  \end{ytableau}\, , \;
\begin{ytableau} 0 & *(yellow!50) 0 & *(yellow!50) 0 \\
1 & *(yellow!50) 0  \end{ytableau}\, , \;
\begin{ytableau} 1 & *(yellow!50) 0 & *(yellow!50) 0 \\
1 & *(yellow!50) 0  \end{ytableau}\, , \;
\begin{ytableau} 0 & *(yellow!50) 1 & *(yellow!50) 0 \\
0 & *(yellow!50) 0  \end{ytableau}\, , \;
\begin{ytableau} 0 & *(yellow!50) 1 & *(yellow!50) 0 \\
1 & *(yellow!50) 0  \end{ytableau}\, ,\]
\[\begin{ytableau} 0 & *(yellow!50) 0 & *(yellow!50) 0 \\
0 & *(yellow!50) 1  \end{ytableau}\, , \;
\begin{ytableau} 0 & *(yellow!50) 0 & *(yellow!50) 1 \\
0 & *(yellow!50) 0  \end{ytableau}\, , \;
\begin{ytableau} 0 & *(yellow!50) 0 & *(yellow!50) 1 \\
1 & *(yellow!50) 0  \end{ytableau}\, \text{ and } \;
\begin{ytableau} 0 & *(yellow!50) 0 & *(yellow!50) 1 \\
0 & *(yellow!50) 1  \end{ytableau}\, .
\]
\end{example}

\subsection{Restricted chain and order polytopes}\label{sec: prelim restricted chain-order polytopes}

We now consider restricted versions of the order and chain polytopes; that is, the intersection of these polytopes with certain hyperplanes. To define them, we fix the following setup that will be used throughout the rest of the paper. 

\begin{setup}\label{setup: restricted chain-order polytope}
Fix a Young diagram $\lambda$. Let $m_1,m_2 \in \NN$ be smallest natural numbers such that $\lambda \subseteq [m_1]\times [m_2]$. For example, if $\lambda$ is given by the partition $\{\lambda_1 \ge \lambda_2 \ge \dots \ge \lambda_s \}$, then $(m_1,m_2) = (s,\lambda_1)$. Fix a vector $\underline{d} = (d_{m_1 - 1}, d_{m_1 -2}, \dots, d_0, d_{-1}, \dots, d_{1-m_2}) \in \NN^{m_1+m_2-1}$ and a natural number $k \in \NN$. Define the set of integers $\diags(\lambda) := \{1-m_2, 2-m_2, \dots, m_1-1\}$, which index the \emph{diagonals} of $\lambda$. For each $\ell \in \diags(\lambda)$, denote by $r_\ell$ the maximal element in the \emph{$\ell$th diagonal of $\lambda$}: $\{(i,j) \in \lambda \mid i-j = \ell\}$. Visually, the box $r_\ell$ is the bottom-right-most box of the Young diagram on the $\ell$th diagonal. See \Cref{fig: diagonals and r boxes}.
\begin{figure}[h]
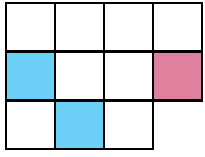

    \centering
    \[ \ydiagram [*(cyan!50)]{0,1,1+1}
    *[*(purple!50)]{0,3+1,0}
    *[*(white) ]{4,4,3}
    \]
    \caption{Highlighted in blue, the $1$st diagonal given by the boxes $(2,1)$ and $r_{1}=(3,2)$. Highlighted in purple, the box $r_{-2} = (2,4)$.}
    \label{fig: diagonals and r boxes}
\end{figure}
\end{setup}

\begin{definition}
Fix Setup~\ref{setup: restricted chain-order polytope}. The \emph{restricted order polytope of $\lambda$} with respect to $\underline{d}$ and $k$ is the restriction of the $k$th dilate of the order polytope of $\lambda$ defined by:
\[ 
\mO(\lambda)^k_{\underline{d}} = 
\left\{ x \in k \mO(\lambda) \left| \ \sum_{i-j = \ell} x_{(i,j)} = d_\ell \text{ for all } \ell \in \diags(\lambda) \right.\right\}. 
\]
The \emph{restricted chain polytope of $\lambda$} with respect to $\underline{d}$ and $k$ is the restriction of the $k$th dilate of the chain polytope of $\lambda$ defined by:
\[ 
\mC(\lambda)^k_{\underline{d}} = 
\left\{ x \in k \mC(\lambda) \left| \ \sum_{p\leq r_\ell} x_p = d_\ell \text{ for all } \ell \in \diags(\lambda) \right.\right\}. \]
\end{definition}

\begin{remark}
If $\lambda = \{(1,1), \dots, (n,n)\}$ is square and $\underline{d}=(1,2,\dots,n-1,n,n-1,\dots,1)$, then the restricted order polytope is equal to the restricted Gelfand-Tsetlin polytope and the restricted chain polytope is equal to the restricted Birkhoff polytope. The period-collapse phenomenon of these polytopes is studied in \cite{andersson2022restricted}. In \Cref{sec: computations}, we show that period-collapse also occurs for some other families of polytopes.
\end{remark} 

\section{Main results}\label{sec: main results}

In this section we explain our main results, which use combinatorial mutations to connect restricted order polytopes to restricted chain polytopes. In particular, by \Cref{prop: mutation equiv same Ehrhart}, if a polytope exhibits period collapse, then all mutation equivalent polytopes simultaneously exhibit period collapse.

\begin{theorem}\label{thm : restricted order and chain mutation equivalent}
Fix \Cref{setup: restricted chain-order polytope}. The restricted order polytope $\mO(\lambda)^k_{\underline{d}}$ and the restricted chain polytope $\mC(\lambda)^k_{\underline{d}}$ are mutation equivalent.
\end{theorem}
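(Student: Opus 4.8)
The plan is to interpolate between the restricted order polytope and the restricted chain polytope through a sequence of restricted chain-order polytopes, and to show that consecutive members of this sequence differ by a single combinatorial mutation. Concretely, I would fix a chain of up-sets
\[
\emptyset = C_0 \subsetneq C_1 \subsetneq \cdots \subsetneq C_M = \lambda,
\]
where $C_{i+1} = C_i \cup \{p_{i+1}\}$ is obtained by adjoining a single box, the boxes being added in an order compatible with the partial order so that every $C_i$ is again an up-set. To each $C_i$ I attach its restricted chain-order polytope $\mO_{C_i}(\lambda)^k_{\underline{d}}$ (\Cref{def: restricted chain-order polytope}), namely $k\mO_{C_i}(\lambda)$ cut out by the affine hyperplanes indexed by $\diags(\lambda)$ with constants $\underline{d}$. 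Since $\mO_{C_0}(\lambda)^k_{\underline{d}} = \mO(\lambda)^k_{\underline{d}}$ and $\mO_{C_M}(\lambda)^k_{\underline{d}} = \mC(\lambda)^k_{\underline{d}}$, it suffices to show that $\mO_{C_i}(\lambda)^k_{\underline{d}}$ and $\mO_{C_{i+1}}(\lambda)^k_{\underline{d}}$ are mutation equivalent for each $i$, since mutation equivalence is transitive.

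For each step $C_i \to C_{i+1} = C_i \cup \{p\}$ I would write down the tropical map $\varphi_{w,F}$ that performs the local transfer move at the box $p$: here $w$ is supported on the box $p$ and $F \subset w^\perp$ is the lattice polytope spanned by the boxes covered by $p$ in $\lambda$, so that $\varphi_{w,F}$ modifies only the $p$-coordinate and fixes all others. These maps are precisely the pieces into which Pak's map decomposes in \Cref{proposition: Pak map as tropical maps}. Two facts are then needed, each furnished by one of our lemmas. \Cref{lemma : chi_r bijection restricted polytopes} identifies $\varphi_{w,F}$ as a bijection $\mO_{C_i}(\lambda)^k_{\underline{d}} \to \mO_{C_{i+1}}(\lambda)^k_{\underline{d}}$, the key point being that the hyperplane condition indexed by $\ell \in \diags(\lambda)$ for $\mO_{C_i}(\lambda)^k_{\underline{d}}$ is carried to the corresponding condition for $\mO_{C_{i+1}}(\lambda)^k_{\underline{d}}$; this is where the corner functionals and the telescoping that relates diagonal sums to hook sums through the boxes $r_\ell$ enter. \Cref{lemma: tropical map is a mutation} then shows that the image $\varphi_{w,F}(\mO_{C_i}(\lambda)^k_{\underline{d}})$ is convex, so that the step is an honest combinatorial mutation in the sense of \Cref{sec: prelim combinatorial mutations} and not merely a piece-wise linear bijection.

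Composing these $M$ mutations exhibits $\mO(\lambda)^k_{\underline{d}}$ and $\mC(\lambda)^k_{\underline{d}}$ as mutation equivalent, which proves the theorem. I expect the main obstacle to be \Cref{lemma : chi_r bijection restricted polytopes}: one must verify that the restriction hyperplanes transform correctly under each single-box tropical map. This is exactly the compatibility that fails for the global transfer map, as observed in \cite[Section~5]{andersson2022restricted}, and it is recovered here precisely because passing through the chain-order interpolation moves the support of the $\ell$th hyperplane from a diagonal towards a hook one box at a time, while keeping the constant $d_\ell$ fixed. Secondary care is needed to confirm that each $C_i$ is genuinely an up-set (so that $\mO_{C_i}(\lambda)$ is defined), that $F$ is a lattice polytope contained in $w^\perp$ (so that $\varphi_{w,F}$ is a legitimate tropical map), and that the convexity asserted in \Cref{lemma: tropical map is a mutation} can be read off by tracking how the facet-defining inequalities of $k\mO_{C_i}(\lambda)$ are sheared.
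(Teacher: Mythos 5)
Your global strategy coincides with the paper's proof: fix a maximal flag of up-sets $\emptyset = C_0 \subsetneq C_1 \subsetneq \dots \subsetneq C_M = \lambda$, interpolate through the restricted chain-order polytopes $\mO_{C_i}(\lambda)^k_{\underline{d}}$, use \Cref{lemma : chi_r bijection restricted polytopes} for compatibility with the restriction hyperplanes and \Cref{lemma: tropical map is a mutation} for convexity, and conclude by transitivity of mutation equivalence. The gap is in your description of the map realising a single step $C_i \to C_{i+1} = C_i \cup \{r\}$, where $r = (a,b)$ is a corner of $\lambda \backslash C_i$ on the $\ell$th diagonal. You claim this step is \emph{one} tropical map $\varphi_{w,F}$ with $w$ supported on the box $r$, modifying only the coordinate $x_r$, and you assert that these single-box maps are ``precisely the pieces'' of \Cref{proposition: Pak map as tropical maps}. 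They are not. The bijection in \Cref{lemma : chi_r bijection restricted polytopes} is Pak's map $\chi_r$, which is not a tropical map and which changes \emph{every} box of the $\ell$th diagonal below $r$; its decomposition in \Cref{proposition: Pak map as tropical maps} consists of $i_{\max}(r)+1 = \min\{a,b\}$ tropical maps $\varphi_i = \varphi_{w_i,F_i}$ with $w_i = e_{(a-i-1,\,b-i-1)} - e_{(a-i,\,b-i)}$ supported on \emph{two} boxes of the diagonal and $F_i = \conv\{e_{(a-i-1,\,b-i)}, e_{(a-i,\,b-i-1)}\}$, followed by a unimodular map $\psi$. So each up-set step requires a whole sequence of mutations plus a linear change of coordinates, not one mutation.

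This is not a cosmetic discrepancy: the single-coordinate move $x_r \mapsto x_r - \max\{x_{(a-1,b)}, x_{(a,b-1)}\}$, all other coordinates fixed, is exactly the transfer-map mutation that relates the unrestricted order and chain polytopes, and it is precisely the map that fails to respect the restriction hyperplanes --- the failure observed in \cite[Section~5]{andersson2022restricted} that this construction is designed to bypass. Concretely, take $\lambda$ the $2\times 2$ square and $r = (2,2)$. Then
\[
H_0^{\emptyset}: \ x_{(1,1)} + x_{(2,2)} = d_0
\qquad \text{and} \qquad
H_0^{\{r\}}: \ x_{(1,2)} + x_{(2,1)} - x_{(1,1)} + x_{(2,2)} = d_0,
\]
and the single-coordinate move sends the left-hand side of $H_0^{\{r\}}$ to $\min\{x_{(1,2)},x_{(2,1)}\} - x_{(1,1)} + x_{(2,2)}$, which is not constantly $d_0$ on $H_0^{\emptyset}$; whereas $\chi_r$ works because it \emph{also} replaces $x_{(1,1)}$ by $\min\{x_{(1,2)},x_{(2,1)}\} - x_{(1,1)}$, and $\min\{x_{(1,2)},x_{(2,1)}\} + \max\{x_{(1,2)},x_{(2,1)}\} = x_{(1,2)} + x_{(2,1)}$. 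Relatedly, the mechanism you propose for hyperplane compatibility (``the support of the $\ell$th hyperplane moves one box at a time under each mutation'') is not how the proof goes: each tropical piece preserves every hyperplane exactly, $\varphi_i(H_\ell^{C}) = H_\ell^{C}$, because $w_i$ is a difference of basis vectors along a diagonal of $\lambda \backslash C$, and it is only the final unimodular map $\psi$ that converts $H_\ell^{C}$ into $H_\ell^{C \cup \{r\}}$. Once you replace your single-box maps by the full decomposition $\chi_r = \psi \circ \varphi_{i_{\max}(r)} \circ \dots \circ \varphi_0$, your argument becomes the paper's proof.
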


We construct a sequence of mutation equivalent polytopes that interpolate between the restricted order and restricted chain polytopes. We do this by defining a restricted analogue of the chain-order polytope. We show that all piece-wise linear maps that connect these polytopes are combinatorial mutations. The foundation for our construction is the piece-wise linear map introduced by Pak \cite{pak2001hook}.

\medskip

\noindent \textbf{Pak's piece-wise linear map.}
Fix \Cref{setup: restricted chain-order polytope}. We recall, from \cite[Section~4]{pak2001hook}, the piece-wise linear map $\xi_\lambda : \RR^\lambda \rightarrow \RR^\lambda$ which bjectively maps $\mO(\lambda)^k_{\underline{d}}$ to $\mC(\lambda)^k_{\underline{d}}$. The map is defined inductively on $|\lambda|$. If $|\lambda| =1$, then $\xi_\lambda = \id$. Otherwise, if $|\lambda|>1$, then let $r=r_{\ell}$ be a corner of $\lambda$. We recall the convention that for any $x \in \RR^\lambda$, we have $x_p = 0$ if $p \notin \lambda$.
Define the map $\chi_{r}: \RR^\lambda \to \RR^\lambda$ as follows:
\[ 
\chi_{r}(x)_{(i,j)} = 
\begin{cases} x_{(i,j)} & \text{ if } i-j \neq \ell, \\
x_{(i,j)}-\max\{ x_{(i-1,j)},x_{(i,j-1)} \} & \text{ if } (i,j) = r,\\
\max\{x_{(i-1,j)},x_{(i,j-1)}\} & \\
\quad + \min\{ x_{(i+1,j)}, x_{(i,j+1)} \}-x_{(i,j)} &\text{ if } i-j = \ell \text{ and } (i,j) \neq r.
\end{cases}
\]
By induction, the map $\xi_{\lambda \backslash \{r\}}$ is already defined, so we define $\xi_\lambda = (\xi_{\lambda \backslash \{r\}} \times \id) \circ \chi_{r}$. By \cite[Theorem~4]{pak2001hook}, the map $\xi_\lambda$ is well-defined, in particular it does not depend on the choice of corner $r$ above.

We naturally extend the definition of the map $\chi_r$ to non-corners $r \in \lambda$. This is done by defining $\chi_r(x)_{(i,j)} = x_{(i,j)}$ for all $(i,j) \nless r$ and taking the above definition for all other coordinates $(i,j) \le r$. So we may express Pak's map as a composition: 
\begin{equation}\label{eqn: pak map decomp coarse}
\xi_\lambda = \chi_{r_1} \circ \chi_{r_2} \circ \dots \circ \chi_{r_n},    
\end{equation}
for any permutation $(r_1, \dots, r_n)$ of the elements of $\lambda$ that satisfies $r_i < r_j \implies i < j$.

\medskip \noindent
\textbf{A decomposition of Pak's map.}
Fix $r = (a, b) \in \lambda$ in the $\ell$th diagonal of $\lambda$.
We express the map $\chi_r$ as the composition of tropical maps and a unimodular map. In fact, we will show that this gives a sequence of combinatorial mutations. We define the tropical maps and unimodular map as follows:
\begin{itemize}
    \item For each $0 \le i \le i_{\max}(r) := \min\{a, b\}-1$, 
    let 
    \[
    w_i = e_{(a-i-1,\, b-i-1)} -e_{(a-i,\, b-i)} 
    \text{ and }
    F_i = \conv\left\{e_{(a-i-1,\, b-i)},\, e_{(a-i,\, b-i-1)}\right\}.
    \]
    Define the tropical map $\varphi_i := \varphi_{w_i,F_i}$.
    \item Let $\psi$ be the unimodular map:
    \[ 
    \psi(x)_{(i,j)} = 
    \begin{cases} x_{(i,j)} & \text{ if } i-j \neq \ell, \\
    x_{(i,j)} - (x_{i-1,j} + x_{i,j-1}) & \text{ if } (i,j) = r,\\
    -x_{(i,j)} + (x_{i-1,j} + x_{i,j-1}) & \text{ if } i-j = \ell \text{ and } (i,j) \neq r. \end{cases}
    \]
\end{itemize}

\begin{prop} \label{proposition: Pak map as tropical maps} With the notation above,
$\chi_r= \psi \circ \varphi_{i_{\max}(r)}\circ \dots \circ \varphi_0$.
\end{prop}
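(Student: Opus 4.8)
The plan is to prove the equality of piece-wise linear maps $\chi_r = \psi \circ \varphi_{i_{\max}(r)} \circ \dots \circ \varphi_0$ directly, by evaluating both sides coordinate-by-coordinate on $\RR^\lambda$. Write $\ell = a-b$, $n = i_{\max}(r) = \min\{a,b\}-1$, and let $p_i = (a-i,\,b-i)$ for $0 \le i \le n$ be the boxes of the $\ell$th diagonal that are $\le r$, so $p_0 = r$ and $p_{n+1} = (a-n-1,\,b-n-1) \notin \lambda$. Throughout I would use the convention $x_p = 0$ for $p \notin \lambda$, and read $\psi$ (exactly as $\chi_r$ is extended) as the identity on every coordinate $(i,j) \nless r$, so that its three displayed cases apply only to the boxes $p_0, \dots, p_n$.

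First I would record the effect of each tropical map. After the one-line check that $F_i \subset w_i^\perp$, one has $x_{\min} = \min\{x_{(a-i-1,b-i)},\, x_{(a-i,b-i-1)}\} =: M_i$, so $\varphi_i = \varphi_{w_i,F_i}$ acts by $x \mapsto x - M_i w_i$: it subtracts $M_i$ from the coordinate $p_{i+1}$, adds $M_i$ to the coordinate $p_i$, and fixes everything else (at the boundary $i=n$ the vertex $e_{p_{n+1}} = 0$, so $\varphi_n$ merely adds $M_n$ to $p_n$). The structural observation driving the proof is that the support of each $w_i$ lies on the $\ell$th diagonal, whereas each $M_i$ depends only on the off-diagonal boxes $(a-i-1,b-i)$ and $(a-i,b-i-1)$; since no $\varphi_j$ ever alters an off-diagonal coordinate, the numbers $M_0, \dots, M_n$ are all computed from the original input $x$ and survive the composition unchanged. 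Hence, writing $y := \varphi_n \circ \dots \circ \varphi_0(x)$, the coordinate $p_i$ receives $+M_i$ from $\varphi_i$ and $-M_{i-1}$ from $\varphi_{i-1}$, giving $y_{p_0} = x_{p_0} + M_0$ and $y_{p_i} = x_{p_i} - M_{i-1} + M_i$ for $1 \le i \le n$, while $y_q = x_q$ for all other $q$.

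Then I would apply $\psi$, simplifying everywhere with the elementary identity $\min\{A,B\} + \max\{A,B\} = A+B$. Since $\psi$ reads $y$ at each $p_i$ together with its two off-diagonal neighbours, and $y = x$ on all off-diagonal boxes, at $p_0 = r$ I get $\psi(y)_r = y_r - x_{(a-1,b)} - x_{(a,b-1)} = x_r + M_0 - x_{(a-1,b)} - x_{(a,b-1)} = x_r - \max\{x_{(a-1,b)}, x_{(a,b-1)}\}$, which is exactly $\chi_r(x)_r$. For $1 \le i \le n$ I get $\psi(y)_{p_i} = -y_{p_i} + x_{(a-i-1,b-i)} + x_{(a-i,b-i-1)} = -x_{p_i} + M_{i-1} + \max\{x_{(a-i-1,b-i)},\, x_{(a-i,b-i-1)}\}$, and identifying $M_{i-1} = \min\{x_{(a-i+1,b-i)},\, x_{(a-i,b-i+1)}\}$ (the minimum over the lower neighbours of $p_i$) shows this equals $\chi_r(x)_{p_i}$. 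On off-diagonal coordinates and on coordinates $\nless r$ both maps are the identity, and the $x_p = 0$ convention makes the boundary term at $i = n$ match on both sides automatically. This gives the coordinatewise equality, proving the proposition; I would also remark in passing that each $w_i$ is primitive with $F_i$ a lattice polytope in $w_i^\perp$, and that $\psi$ has a block-triangular integer matrix whose diagonal entries are $\pm 1$, so it is indeed unimodular.

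The verification is essentially bookkeeping, so the genuine crux — which I would emphasise — is the observation that the tropical minima $M_i$ are frozen at their original values because they involve only off-diagonal coordinates that no mutation touches. This is precisely what lets the accumulated minima telescope through the composition and then, upon re-reading the same off-diagonal coordinates via $\psi$, convert into the maxima appearing in Pak's map $\chi_r$. The two points needing care when writing it out fully are the index matching (the minimum $M_{i-1}$ that $\varphi_{i-1}$ subtracts into $p_i$ is exactly the lower-neighbour minimum in the $\chi_r$ formula at $p_i$) and the degenerate behaviour at $i = n$, where $p_{n+1}$ and possibly a vertex of $F_n$ leave $\lambda$; both are handled uniformly by the $x_p = 0$ convention since that same convention is built into $\chi_r$.
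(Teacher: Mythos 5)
Your proposal is correct and follows essentially the same route as the paper: compute the composition $\varphi_{i_{\max}(r)} \circ \dots \circ \varphi_0$ coordinate-wise (your ``frozen minima'' observation fills in the step the paper dismisses as straightforward, and your telescoped formula agrees with the paper's displayed one), then apply $\psi$ and convert minima to maxima via $\min\{u,v\} - u - v = -\max\{u,v\}$. Your careful handling of the boundary case $i = i_{\max}(r)$ via the $x_p = 0$ convention is a welcome addition, but it is not a different method.
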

\begin{proof}
It is straightforward to show that the composition of the tropical maps is
\[ 
(\varphi_{i_{\max}(r)}\circ \dots \circ \varphi_0(x))_{(i,j)} = \begin{cases} x_{(i,j)} & \text{ if } i-j \neq \ell,\\
x_{(i,j)}+\min\{x_{(i-1,j)},x_{(i,j-1)}\} & \text{ if } (i,j)=r, \\
x_{(i,j)}+\min\{x_{(i-1,j)},x_{(i,j-1)}\} & \\
\quad -\min\{x_{(i+1,j)},x_{(i,j+1)}\} & \text{ if } i-j =\ell \text{ and } (i,j) < r. \end{cases}\]
For all $u, v \in \RR$ we have $\min\{u,v\}-u-v= -\max\{u,v\}$, so it follows that
\[ \psi \circ \varphi_{i_{\max}(r)}\circ \dots \circ \varphi_0(x) = \chi_r(x) \]
for every $x \in \RR^\lambda$.
\end{proof}

To prove that these tropical maps are combinatorial mutations, we proceed inductively. For each $0 \le i \le i_{\max}(r)$, we show that the image of the polytope under $\varphi_i$ is convex. As a result, we obtain a collection of mutation equivalent polytopes. Recall that Pak's map $\xi_\lambda$ can be expressed as a composition of maps $\chi_r$, as in \Cref{eqn: pak map decomp coarse}. Then, for each $1 \le s \le n$, the polytope 
\[
\mO_C(\lambda)_{\underline d}^k = 
\chi_{r_s} \circ \chi_{r_{s+1}} \circ \dots \circ \chi_{r_n}(\mO(\lambda)_{\underline d}^k)
\]
can be regarded as the restricted analogue of the chain-order polytope $\mO_C(\lambda)$ with respect to the up-set $C = \{r_s, r_{s+1}, \dots,  r_n\}$. We give an abstract definition of the polytope $\mO_C(\lambda)_{\underline d}^k$ and prove that the above equality holds in \Cref{sec: proofs}.

\begin{definition}\label{def: restricted chain-order polytope}
Fix Setup~\ref{setup: restricted chain-order polytope} and let $C \subseteq \lambda$ be an up-set. For each $\ell \in \diags(\lambda)$, we define:
\begin{itemize}
    \item $R_\ell=\{(a,b) \in \lambda \mid (a,b) \leq r_\ell\}$ the rectangular sub-Young diagram of $\lambda$,
    
    \item $S_\ell = \max((\lambda \backslash C) \cap R_\ell)$ the corners of $(\lambda \backslash C) \cap R_\ell$,
    
    \item $\overline{S_\ell} = \{(a,b) \in R_\ell \mid (a+i, b+i) \in S_\ell \text{ for some } i \ge 0 \}$ the diagonals of $R_\ell$ ending in $S_\ell$,
    
    \item $T_\ell = \min(C \cap R_\ell)$ the minimal elements of $C \cap R_\ell$,
    
    \item $\overline{T_\ell} = \{(a,b) \in R_\ell \mid (a+i, b+i) \in T_\ell \text{ for some } i \ge 1\}$ the diagonals in $R_\ell$ that end one step before $T_\ell$.
    
\end{itemize}
The \emph{restricted chain-order polytope} with respect to $\underline d \in \NN^{m_1 + m_2 - 1}$ and $k \in \NN$ is
\[ 
\mO_C(\lambda)^k_{\underline{d}} = 
\left\{ x \in k \mO_C(\lambda) 
\left| \ 
\sum_{s \in \overline{S_\ell}} x_s
- \sum_{t \in \overline{T_\ell}} x_t
+ \sum_{u \in C \cap R_\ell} x_u = d_\ell 
\text{ for each } \ell \in \diags(\lambda) \right. \right\}\, . 
\]
\end{definition}

\begin{example}\label{example: restricted chain order polytope}
Let $\lambda = \{(1,1), \dots, (5,6)\}$ be the rectangular Young diagram with $4$ rows and $5$ columns. Let $C \subset \lambda$ be the up-set with minimal elements $(4,3)$ and $(3,5)$. Fix $\ell = -1$, then $r_\ell=(5,6)$ and the set $R_\ell = \lambda$ is the entire poset. The sets $S_\ell$, $\overline S_\ell$, $T_\ell$ and $\overline T_\ell$ are shown in \Cref{fig: restricted chain order example}.

\begin{figure}[h]
    \centering
    \[
    \begin{ytableau}{}
 & \overline{s} & \overline{t} & & \overline{s} & \\
 \overline{t} & & \overline{s} & \overline{t} & & s \\
 & \overline{t} & & s & *(yellow!50) t & *(yellow!50) \\
 \overline{s} & & *(yellow!50) t & *(yellow!50) & *(yellow!50) & *(yellow!50) \\
 & s & *(yellow!50)  & *(yellow!50) & *(yellow!50) & *(yellow!50)
    \end{ytableau}
    \]
    \caption{Depiction of the Young diagram in \Cref{example: restricted chain order polytope}. Shaded boxes represent the up-set $C$. Fix $\ell = -1$. The set $S_\ell$ is given by the boxes labelled with $s$, the set $\overline S_\ell$ are the boxes $s$ and $\overline s$, the set $T_\ell$ are the boxes $t$, and the set $\overline T_\ell$ are the boxes $\overline t$.}
    \label{fig: restricted chain order example}
\end{figure}

\end{example}

\section{Proofs}\label{sec: proofs}


In this section we prove all the results from \Cref{sec: main results}, in particular \Cref{thm : restricted order and chain mutation equivalent}, which follow from Lemmas~\ref{lemma : chi_r bijection restricted polytopes} and \ref{lemma: tropical map is a mutation}. We begin by fixing the notation for the affine hyperplanes that define the restricted chain-order polytopes. We then proceed to prove the two lemmas followed by the main theorem.

\begin{notation}
Following the notation from Definition~\ref{def: restricted chain-order polytope}, given a restricted chain-order polytope $\mO_C(\lambda)_{\underline d}^k$ and $\ell \in \diags(\lambda)$, we define the hyperplane \[
H_\ell = \left\{x \in \RR^\lambda \left\vert \ \sum_{s \in \overline{S_\ell}} x_s
- \sum_{t \in \overline{T_\ell}} x_t
+ \sum_{u \in C \cap R_\ell} x_u = d_\ell \right.\right\}.
\]
In particular we have that $\mO_C(\lambda)_{\underline d}^k = k\mO_C(\lambda) \cap \bigcap_\ell H_\ell$, where the intersection runs over $\diags(\lambda)$. Whenever we work with hyperplanes $H_\ell$ with respect to different up-sets $C$, we write $H_\ell^C$ for $H_\ell$ to avoid ambiguity. Similarly, we write $S_\ell^C$, $\overline{S_\ell^C}$, $T_\ell^C$ and $\overline{T_\ell^C}$ for the sets in Definition~\ref{def: restricted chain-order polytope}.
\end{notation}

\begin{lemma}\label{lemma : chi_r bijection restricted polytopes}
Fix \Cref{setup: restricted chain-order polytope} and let $C \subsetneq \lambda$ be an up-set. Let $r=(a,b)$ be a corner of $\lambda \backslash C$ and $\ell = a-b$. We denote by $\chi_r:\RR^\lambda \to \RR^\lambda$ the extension of Pak's map defined on $\RR^{\lambda\backslash C}$. Then $\chi_r: \mO_C(\lambda) \to \mO_{C\cup\{r\}}(\lambda)$ is a bijection. Recall the decomposition $\chi_r = \psi \circ \varphi_{i_{\max}(r)} \circ \dots \circ \varphi_0$ from \Cref{proposition: Pak map as tropical maps}. For all $i \in \{0, \dots, i_{\max}(r) \}$ and $\ell \in \diags(\lambda)$, we have that $\varphi_i(H_\ell^C) = H_\ell^C$ and $\psi(H_\ell^C) = H_\ell^{C \cup \{r\}}$. Hence $\chi_r: \mO_C(\lambda)^k_{\underline{d}} \to \mO_{C\cup\{r\}}(\lambda)^k_{\underline{d}}$ is a bijection.
\end{lemma}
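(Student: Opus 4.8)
The statement overloads the symbol $\ell$; throughout I reserve $\ell=a-b$ for the diagonal containing the corner $r=(a,b)$, and I let $\ell'$ range over $\diags(\lambda)$ when indexing the hyperplanes $H_{\ell'}^C$. The plan is to prove the three assertions separately and then assemble them. For the unrestricted bijection $\chi_r\colon\mO_C(\lambda)\to\mO_{C\cup\{r\}}(\lambda)$ I would argue directly from the defining inequalities: because $r$ is a corner of $\lambda\setminus C$, every box of $\lambda$ strictly above $r$ lies in $C$ while no box weakly below $r$ lies in $C$, so $\chi_r$ alters only the coordinates on the diagonal $\ell$ weakly below $r$. One checks that this turns the order relations of $\lambda\setminus C$ involving $r$ into the chain inequalities of $\mO_{C\cup\{r\}}(\lambda)$ that involve the new chain box $r$, and that the explicit inverse of $\chi_r$ reverses this; alternatively the bijection is the single building block of Pak's map specialised to the chain--order interpolation, and one may quote \cite[Theorem~4]{pak2001hook}. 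Since $\chi_r$, and each of $\varphi_i$ and $\psi$, is positively homogeneous, this bijection passes to the $k$-th dilates, giving $\chi_r(k\mO_C(\lambda))=k\mO_{C\cup\{r\}}(\lambda)$.

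The core of the argument is the behaviour of the hyperplanes, which I would establish through the decomposition $\chi_r=\psi\circ\varphi_{i_{\max}(r)}\circ\dots\circ\varphi_0$ of \Cref{proposition: Pak map as tropical maps}. Each $\varphi_i$ translates a point by a multiple of $w_i=e_{(a-i-1,b-i-1)}-e_{(a-i,b-i)}$, so a linear functional $c$ satisfies $c\cdot\varphi_i(x)=c\cdot x$ exactly when $c_{(a-i-1,b-i-1)}=c_{(a-i,b-i)}$. Thus $\varphi_i(H_{\ell'}^C)=H_{\ell'}^C$ reduces to showing that the functional defining $H_{\ell'}^C$ assigns equal weight to the two consecutive boxes of diagonal $\ell$ forming the $i$-th pair. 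The key combinatorial input is that this functional is constant along diagonal $\ell$ on all boxes weakly below $r$: no such box lies in $C$, so its weight is $[\,\cdot\in\overline{S_{\ell'}^C}\,]-[\,\cdot\in\overline{T_{\ell'}^C}\,]$, and the ray descriptions of $\overline{S_{\ell'}^C}$ and $\overline{T_{\ell'}^C}$ force this difference to be constant. When $r\in R_{\ell'}$ the corner $r$ itself lies in $S_{\ell'}^C$, so the whole up-left ray of diagonal $\ell$ below $r$ lies in $\overline{S_{\ell'}^C}$ and the weight is constant; when $r\notin R_{\ell'}$ one shows, using that $C$ is an up-set, that the largest box of diagonal $\ell$ inside $R_{\ell'}$ cannot be maximal in $(\lambda\setminus C)\cap R_{\ell'}$ (otherwise its larger in-rectangle neighbour would force $r\in C$), so the weight vanishes identically on that diagonal. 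The only pair not covered by this is the topmost one, where $(a-i_{\max}-1,b-i_{\max}-1)$ has a zero coordinate and hence leaves $\lambda$; there $F_{i_{\max}}$ has a vertex at the origin, so $\varphi_{i_{\max}}$ restricts to the identity on the nonnegative orthant containing all the polytopes, and the claim holds on the relevant domain.

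For the unimodular map $\psi$ I would verify the pullback identity $c_{C\cup\{r\}}^{(\ell')}\circ\psi=c_C^{(\ell')}$, where $c_C^{(\ell')}$ denotes the functional defining $H_{\ell'}^C$; since a linear automorphism carries $\{c'\cdot x=d_{\ell'}\}$ to $\{(c'\circ\psi^{-1})\cdot x=d_{\ell'}\}$ with the same constant, this identity yields $\psi(H_{\ell'}^C)=H_{\ell'}^{C\cup\{r\}}$. For the diagonals with $r\notin R_{\ell'}$ nothing changes in $R_{\ell'}$ when $r$ is adjoined to $C$, and $\psi$ only touches coordinates of weight zero, so the identity is immediate. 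For $r\in R_{\ell'}$ I would track how the data of \Cref{def: restricted chain-order polytope} changes: $r$ leaves $S_{\ell'}$ and becomes a new minimal element of $(C\cup\{r\})\cap R_{\ell'}$, so it enters both $T_{\ell'}$ and $C\cap R_{\ell'}$, and the rays $\overline{S_{\ell'}}$ and $\overline{T_{\ell'}}$ shift by one step along diagonal $\ell$; substituting the explicit formula for $\psi$ on the boxes of diagonal $\ell$ below $r$ then produces a telescoping cancellation that reproduces $c_C^{(\ell')}$ exactly.

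Finally, combining the two ingredients, $\varphi_{i_{\max}(r)}\circ\dots\circ\varphi_0$ fixes each $H_{\ell'}^C$ while $\psi$ sends it to $H_{\ell'}^{C\cup\{r\}}$, so $\chi_r$ maps $\bigcap_{\ell'}H_{\ell'}^C$ onto $\bigcap_{\ell'}H_{\ell'}^{C\cup\{r\}}$; intersecting this with $\chi_r(k\mO_C(\lambda))=k\mO_{C\cup\{r\}}(\lambda)$ gives the asserted bijection $\chi_r\colon\mO_C(\lambda)^k_{\underline d}\to\mO_{C\cup\{r\}}(\lambda)^k_{\underline d}$. I expect the main obstacle to be the two functional identities of the middle steps, and in particular the uniform control over all diagonals $\ell'$: establishing the constancy and boundary-vanishing of the weights for the tropical maps, and verifying the telescoping cancellation for $\psi$, both require a careful case analysis of the ray structure of $\overline{S_{\ell'}^C}$ and $\overline{T_{\ell'}^C}$ and of their interaction with the corner $r$.
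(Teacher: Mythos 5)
Your plan has the same architecture as the paper's proof: establish $\chi_r(\mO_C(\lambda)) = \mO_{C\cup\{r\}}(\lambda)$ by a direct check of the defining inequalities, then use the decomposition of \Cref{proposition: Pak map as tropical maps} to prove the two functional identities, and assemble. Your treatment of the tropical maps is correct and in places more careful than the paper: the reduction to showing that the functional defining $H_{\ell'}^C$ takes equal values on the two boxes of $w_i$, the dichotomy $r \in R_{\ell'}$ versus $r \notin R_{\ell'}$, and the observation that for $i = i_{\max}(r)$ one box of $w_i$ leaves $\lambda$, so that the identity only holds on the nonnegative orthant (which suffices, since $F_{i_{\max}}$ contains the origin and all the polytopes lie in that orthant), are all sound; the paper silently elides this last point. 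One caveat: quoting \cite[Theorem~4]{pak2001hook} would not by itself give the single-step bijection $\mO_C(\lambda)\to\mO_{C\cup\{r\}}(\lambda)$, since Pak treats only the full composition from the order polytope to the chain polytope; the inequality check you propose first is what is actually needed, and it is the paper's route.

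The genuine gap is in the $\psi$ step when $r \in R_{\ell'}$. You record the change of data as: $r$ leaves $S_{\ell'}$, enters $T_{\ell'}$ and $C \cap R_{\ell'}$, and the rays shift by one step along diagonal $\ell$. This bookkeeping is incomplete, and with it alone the telescoping cannot close. Since $\psi$ rewrites each coordinate on diagonal $\ell$ weakly below $r$ using the neighbours $x_{(i-1,j)}$ and $x_{(i,j-1)}$, pairing the new functional with $\psi(x)$ produces cross terms supported on the rays of the two \emph{adjacent} diagonals, weakly below $(a-1,b)$ and $(a,b-1)$. These must be absorbed by changes of the functional on those diagonals, and such changes do occur: the covers $(a-1,b)$, $(a,b-1)$ of $r$ may enter $S_{\ell'}^{C\cup\{r\}}$, while $(a+1,b)$, $(a,b+1)$ may leave $T_{\ell'}^{C}$; the paper's four-case analysis near $r$, namely the equivalences $(a+1,b)\in T_{\ell'}^C \Leftrightarrow (a,b-1)\notin S_{\ell'}^{C\cup\{r\}}$ and $(a,b+1)\in T_{\ell'}^C \Leftrightarrow (a-1,b)\notin S_{\ell'}^{C\cup\{r\}}$, is exactly what guarantees that in every configuration the adjacent-diagonal weights increase by one on those rays, matching the cross terms. (Also, on diagonal $\ell$ itself the new $\overline{S}$-ray is empty rather than shifted: when $r-(1,1)\in\lambda$ it is covered inside $(\lambda\setminus(C\cup\{r\}))\cap R_{\ell'}$ by $(a-1,b)$ or $(a,b-1)$.) The smallest example already exhibits the failure: take $\lambda$ the $2\times2$ square, $C=\emptyset$, $r=(2,2)$, $\ell'=\ell=0$. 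The correct functional changes from $x_{(1,1)}+x_{(2,2)}$ to $x_{(1,2)}+x_{(2,1)}-x_{(1,1)}+x_{(2,2)}$, because $(1,2)$ and $(2,1)$ enter $S_0$; your bookkeeping yields $x_{(2,2)}$, and $\psi(x)_{(2,2)} = x_{(2,2)}-x_{(1,2)}-x_{(2,1)}$ does not equal $x_{(1,1)}+x_{(2,2)}$ identically. So the missing idea is the tracking of $S_{\ell'}$ and $T_{\ell'}$ on the diagonals adjacent to $\ell$ and the correlation between those changes; this is the crux of the paper's argument that $\psi(H_{\ell'}^C)=H_{\ell'}^{C\cup\{r\}}$.
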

\begin{proof}
By \cite{pak2001hook}, we have that $\chi_r$ admits an inverse given by:
\[ 
\chi_r^{-1}(x)_{(i,j)} = 
\begin{cases} 
x_{(i,j)}+ \max\{x_{(i-1,j)},x_{(i,j-1)}\} & \text{ if } (i,j) = r,\\
\max\{x_{(i-1,j)},x_{(i,j-1)}\} & \\
\quad\quad + \min\{x_{(i+1,j)}, x_{(i,j+1)}\} -x_{(i,j)} & \text{ if } i-j=\ell \text{ and } (i,j)< r,\\
x_{(i,j)} & \text{ otherwise. }
\end{cases}
\]
Hence, to show that $\chi_r: \mO_C(\lambda) \to \mO_{C\cup\{r\}}(\lambda)$ is a bijection, it suffices to prove that $\chi_r\left( \mO_C(\lambda) \right) = \mO_{C\cup \{r\}}(\lambda)$. Fix $x \in \mO_C(\lambda)$. We show that $\chi_r(x) \in \mO_{C \cup\{r\}}(\lambda)$ by showing that $\chi_r(x)$ satisfies all the defining inequalities of $\mO_{C \cup \{r\}}(\lambda)$:
\begin{itemize}
    \item If $i-j\neq \ell$ or both $i-j = \ell$ and $(i,j)>r$, then $0 \leq \chi_r(x)_{(i,j)} = x_{(i,j)} \leq 1$. Since $r$ is a corner of $\lambda \backslash C$, for every $(i,j) \leq r$ with $i-j=\ell$, it follows that $\max\{x_{(i-1,j)},x_{(i,j-1)}\} \leq x_{(i,j)} \leq \min\{x_{(i+1,j)},x_{(i,j+1)}\}$, hence $0 \leq \chi_r(x)_{(i,j)}\leq 1$.
    
    \item Let $(i_1,j_1) \leq (i_2,j_2)$ in $\lambda \backslash (C \cup \{r\})$. Since, for every $(i,j) \in \lambda \backslash (C \cup \{r\})$ with $i-j =\ell$ we have that $\max\{x_{(i-1,j)},x_{(i,j-1)}\} \leq \chi_r(x)_{(i,j)} \leq \min\{x_{(i+1,j)},x_{(i,j+1)}\}$, it follows that $\chi_r(x)_{(i_1,j_1)} \leq \chi_r(x)_{(i_2,j_2)}$.
    
    \item It remains to show that if $p \in \lambda\backslash (C\cup\{r\})$ and $q_1,\dots,q_n \in C\cup \{r\}$ with $p < q_1 < \dots < q_n$, then $\chi_r(x)_p + \chi_r(x)_{q_1} + \dots + \chi_r(x)_{q_n} \leq 1$. First assume that $q_i \neq r$ for every $i\in\{1,\dots,n\}$. If $p$ does not lie on the $\ell$th diagonal, then $\chi_r(x)_p + \chi_r(x)_{q_1} + \dots + \chi_r(x)_{q_n} = x_p + x_{q_1}+\dots+x_{q_n} \leq 1$. Otherwise, if $p$ lies on the $\ell$th diagonal, then $\chi_r(x)_p + \chi_r(x)_{q_1} + \dots + \chi_r(x)_{q_n} \leq x_r + x_{q_1}+\dots+x_{q_n} \leq 1$.
    
    Suppose that $r \in \{q_1,\dots,q_n\}$. It follows that $r = q_1$. Since $p < q_1$ and $x_p \leq \max\{ x_{(a-1,b)}, x_{(a,b-1)} \}$, we have  $\chi_r(x)_p+ \chi_r(x)_r = \chi_r(x)_p + x_{(a,b)} - \max\{ x_{(a-1,b)}, x_{(a,b-1)} \} \leq x_r$. Hence $\chi_r(x)_p + \chi_r(x)_{q_1} + \dots + \chi_r(x)_{q_n} \leq x_r + x_{q_2} + \dots + x_{q_n} \leq 1$.
\end{itemize}

So, we have shown that $\chi_r(\mO_C(\lambda)) \subseteq \mO_{C \cup\{r\}}(\lambda)$. Using the explicit formula for $\chi_r^{-1}$, the reverse inclusion $\chi_r^{-1}(\mO_{C\cup\{r\}}(\lambda)) \subseteq \mO_C(\lambda)$ follows similarly.

Recall the decomposition $\chi_r = \psi \circ \varphi_{i_{\max}(r)} \circ \dots \circ \varphi_0$ from \Cref{proposition: Pak map as tropical maps} and the defining equation of the hyperplane $H_\ell^C$ given by
\[ 
H^C_\ell: \quad \sum_{s \in \overline{S_\ell^C}} x_s
- \sum_{t \in \overline{T_\ell^C}} x_t
+ \sum_{u \in C \cap R_\ell} x_u = d_\ell.
\]
Consider the tropical map $\varphi_i = \varphi_{w_i, F_i}$. We have that $w_i = e_{\alpha} - e_{\beta}$ for some $\alpha$ and $\beta$ in $\lambda$ that lie on the same diagonal of $\lambda \backslash C$. Since the sets $\overline{S_\ell^C}$ and $\overline{T_\ell^C}$ are unions of diagonals of $\lambda \backslash C$, it follows that $\varphi_i(H_\ell^C) = H_\ell^C$ for each $i \in \{0,\dots, i_{\max}(r) \}$ and $\ell \in \diags(\lambda)$.

It remains to show that $\psi(H^C_\ell) = H^{C\cup\{r\}}_\ell$. Let $x \in H^C_\ell$, then we have
\begin{align*}
\sum_{\substack{i-j=\ell \\ (i,j)\leq r}} x_{(i,j)} = 
\psi(x)_{(a,b)} +\psi(x)_{(a-1,b)} +\psi(x)_{(a,b-1)} +
\sum_{\substack{i-j=\ell \\ (i,j)<r}} (-\psi(x)_{(i,j)} + \psi(x)_{(i-1,j)} +\psi(x)_{(i,j-1)}).
\end{align*}
Since $\psi(x)_{(i,j)}=x_{(i,j)}$ for every $(i,j)$ with $i-j\neq \ell$, using the previous equality, the following holds:
\begin{align*} 
d_\ell &= \sum_{s \in \overline{S_\ell^C}} x_s
  - \sum_{t \in \overline{T_\ell^C}} x_t
  + \sum_{u \in C \cap R_\ell} x_u \\
&= \sum_{\substack{s \in \overline{S_\ell^C}\\ i-j\neq \ell}} x_s
  - \sum_{t \in \overline{T_\ell^C}} x_t
  + \sum_{u \in C \cap R_\ell} x_u 
  + \sum_{\substack{i-j=\ell\\ (i,j)\leq r}} x_{(i,j)} \\
&= \sum_{\substack{s \in \overline{S_\ell^C}\\ i-j\neq \ell}} \psi(x)_s
- \sum_{t \in \overline{T_\ell^C}} \psi(x)_t
+ \sum_{u \in C \cap R_\ell} \psi(x)_u \\
&\quad + \psi(x)_{(a,b)} - \sum_{\substack{i-j=\ell\\(i,j)<r}} \psi(x)_{(i,j)}  +\sum_{\substack{i-j=\ell\\ (i,j)\leq r}} \psi(x)_{(i-1,j)} +\psi(x)_{(i,j-1)}
\end{align*}
Note that $r \in T_\ell^{C\cup\{r\}}$. There are four possible behaviours of $C$ near the corner $r$ of $\lambda \backslash C$ as shown below, where the highlighted boxes lie in $C$:

\[ \begin{ytableau}{} & \\ & r \end{ytableau} \, , \qquad \; \begin{ytableau}{} & r\\ *(yellow!50) & *(yellow!50) \end{ytableau} \, , \qquad \; \begin{ytableau}{} & *(yellow!50) \\ r & *(yellow!50) \end{ytableau} \, , \quad \text{ or } \quad \; \begin{ytableau}{} r & *(yellow!50) \\ *(yellow!50) & *(yellow!50) \end{ytableau}\, .
\]

In particular, since $r$ is a corner of $\lambda \backslash C$, we have that $(a-1,b)$ and $(a,b-1)$ are not in $S_\ell^C$. Moreover, $(a+1,b) \in T_\ell^C$ if and only if $(a,b-1) \not\in S_\ell^{C\cup\{r\}}$. Similarly, $(a,b+1) \in T_\ell^C$ if and only if $(a-1,b) \not\in S_\ell^{C\cup\{r\}}$. So, the previous expression can be written as:
\[\sum_{s \in \overline{S^{C\cup\{r\}}_\ell}} \psi(x)_s - \sum_{t \in \overline{T^{C \cup\{r\}}_\ell}} \psi(x)_t + \sum_{u \in (C\cup\{r_\ell\}) \cap R_\ell} \psi(x)_u = d_\ell \]
that is, $\psi(H^C_\ell) = H^{C \cup\{r\}}_\ell$.
\end{proof}

\begin{lemma}\label{lemma: tropical map is a mutation}
Fix Setup~\ref{setup: restricted chain-order polytope} and let $C \subsetneq \lambda$ be an up-set. Let $r = (a, b)$ be a corner of $\lambda \backslash C$ and recall the decomposition $\chi_r = \psi \circ \varphi_{i_{\max}(r)} \circ \dots \circ \varphi_0$ from \Cref{proposition: Pak map as tropical maps}. For every $i \in \{0,\dots,i_{\max}(r)\}$, the tropical map $\varphi_i= \varphi_{w_i,F_i}$ gives a combinatorial mutation of the polytope $(\varphi_{i-1}\circ \dots \circ \varphi_0) (\mO_C(\lambda)^k_{\underline{d}})$.
\end{lemma}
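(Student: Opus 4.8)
The plan is to prove exactly that $\varphi_i(P_{i-1})$ is convex, where I abbreviate $P_{i-1} = (\varphi_{i-1}\circ\dots\circ\varphi_0)(\mO_C(\lambda)^k_{\underline d})$; by the definition in \Cref{sec: prelim combinatorial mutations} this is precisely the assertion that $\varphi_i$ gives a combinatorial mutation of $P_{i-1}$. I would argue by induction on $i$, so that $P_{i-1}$ is already known to be a polytope (it is $\mO_C(\lambda)^k_{\underline d}$ when $i=0$, and a convex image at each earlier stage). The first step is to record the local nature of $\varphi_i$. Writing $p=(a-i-1,b-i-1)$, $q=(a-i,b-i)$, $s=(a-i-1,b-i)$ and $t=(a-i,b-i-1)$ for the four boxes of the relevant $2\times 2$ square, the map $\varphi_i=\varphi_{w_i,F_i}$ fixes every coordinate except $x_p$ and $x_q$, on which it acts by $x_p\mapsto x_p-\min\{x_s,x_t\}$ and $x_q\mapsto x_q+\min\{x_s,x_t\}$. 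In particular $x_p+x_q$, $x_s$, $x_t$ and all remaining coordinates are preserved, so only the single free coordinate $x_p$ is genuinely moved.

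Next I would reduce convexity to a one-dimensional statement along the fibres of the projection forgetting $x_p$. In coordinates $(x_p,y)$, where $y$ collects the coordinates fixed by $\varphi_i$, the map becomes the concave shear $\Phi(x_p,y)=(x_p-g(y),y)$ with $g(y)=\min\{x_s,x_t\}$ concave and piecewise linear. For each $y$ the fibre of $P_{i-1}$ is an interval $[m(y),M(y)]$, with image $[m(y)-g(y),M(y)-g(y)]$; here $m$ is convex and $M$ is concave because $P_{i-1}$ is convex. Since $-g$ is convex, the lower boundary $m-g$ of the image is automatically convex, so $\varphi_i(P_{i-1})$ is convex if and only if its upper boundary $M-g$ is concave. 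The only place concavity can fail is the wall $W=\{x_s=x_t\}$, across which $g$ has a convex kink; thus everything reduces to showing that $M$ bends at $W$ at least as concavely as $g$ does, equivalently that no dent appears along the common facet $\varphi_i(P_{i-1}\cap W)$.

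Finally I would analyse which facets of $P_{i-1}$ determine the upper bound $M(y)$ on $x_p$ near $W$. Because $\varphi_0,\dots,\varphi_{i-1}$ only alter the coordinates $x_{(a-m,b-m)}$ with $0\le m\le i$, all of which are $\ge q>p$, the inequalities relating $x_p$ to $x_s$ and $x_t$ are inherited unchanged from $\mO_C(\lambda)^k_{\underline d}$. In the generic local configuration these are the order relations $x_p\le x_s$ and $x_p\le x_t$, forcing $M(y)\le\min\{x_s,x_t\}=g(y)$, so that $M-g$ is flat—hence concave—exactly where the shear bends, while the hyperplanes $H^C_\ell$, preserved by $\varphi_i$ in \Cref{lemma : chi_r bijection restricted polytopes}, pin down the remaining directions and rule out competing upper bounds. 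The main obstacle is the boundary behaviour: when $s$ or $t$ lies in $C$ the relevant order relation is replaced by a chain inequality, and one must check, across the same four local configurations of $C$ around $r$ that appear in \Cref{lemma : chi_r bijection restricted polytopes}, that neither a chain facet nor the trivial bound $x_p\le 1$ becomes binding on $W$ in a way that creates an upward kink. Equivalently, and perhaps more cleanly, I would propose an explicit inequality description of the candidate image $P_i$ and, using the explicit inverse $\varphi_i^{-1}(x)=x+\min\{x_s,x_t\}\,w_i$, verify the inclusions $\varphi_i(P_{i-1})\subseteq P_i$ and $\varphi_i^{-1}(P_i)\subseteq P_{i-1}$ term by term; this would exhibit $\varphi_i(P_{i-1})=P_i$ as a polytope and hence convex. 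This second route mirrors the verification already carried out for $\chi_r$ in \Cref{lemma : chi_r bijection restricted polytopes}, and the delicate case analysis there is exactly what must be repeated here for the single tropical map $\varphi_i$.
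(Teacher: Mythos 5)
Your reduction of convexity to concavity of the upper fibre boundary $M-g$ is sound, but the claim you use to establish that concavity is false, and repairing it is where the real work lies. Because $\varphi_i$ preserves $x_p+x_q$, every lower bound on $x_q$ in $P_{i-1}$ becomes an upper bound on $x_p$ in the fibre over $y$: already in the base case $i=0$ the order inequalities $x_s\le x_q$, $x_t\le x_q$ of $k\mO_C(\lambda)$ give the competing bound $x_p\le \sigma-\max\{x_s,x_t\}$, where $\sigma:=x_p+x_q$, and this is strictly smaller than $g(y)=\min\{x_s,x_t\}$ whenever $\sigma<x_s+x_t$ (for instance $x_p=0$, $x_q=x_s=x_t>0$). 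So $M$ is not equal to $g$ near the wall and $M-g$ is not flat there. What is true is that $M-g=\min\{0,\ \sigma-x_s-x_t\}$, which is concave; but to prove this identity you must show that every remaining upper bound on $x_p$ --- the cap $x_p\le k$ and every chain facet $x_p+x_{q_1}+\dots+x_{q_n}\le k$ with $q_1<\dots<q_n$ in $C$ --- is at least $g$ on the projection. That is a genuine step: any $q_1>p$ satisfies $q_1\ge s$ or $q_1\ge t$, hence (as $q_1\in C$ while $s,t\notin C$) $q_1>s$ or $q_1>t$, so the chain facet through $s$ or $t$ forces $k-\sum_j x_{q_j}\ge g$. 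This is exactly the check you defer, and your planned case analysis is aimed at configurations that cannot occur: since $\lambda\setminus C$ is a down-set containing $r$ and $s,t\le r$, the boxes $s$ and $t$ never lie in $C$. Also, the hyperplanes $H^C_\ell$ cannot ``rule out competing upper bounds'': they contain $x_p$ and $x_q$ with equal coefficients, so they constrain only $y$.

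The second gap is the inductive step, which you cannot run without more information about $P_{i-1}$. For $i>0$ the inequalities $x_p\le x_s$, $x_p\le x_t$ are indeed inherited, but the lower bounds on $x_q$ --- precisely the source of the competing upper bound above --- are transformed by $\varphi_{i-1}$ into constraints on $x_q+\min\{x_{(a-i+1,b-i)},x_{(a-i,b-i+1)}\}$, so the fibre description of $P_{i-1}$ that your argument needs is unjustified as stated; handling this transformation is the content of the paper's inductive step. Your fallback Route 2 does not close these holes either, because it presupposes an explicit inequality description of the intermediate polytopes $\varphi_{i-1}\circ\dots\circ\varphi_0(\mO_C(\lambda)^k_{\underline d})$, which you do not supply and which is the hard part. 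The paper's proof avoids all of this facet bookkeeping: after using \Cref{lemma : chi_r bijection restricted polytopes} to discard the hyperplanes and work with the unrestricted polytope $\mO_C(\lambda)$, it takes vertices $u\in U_>$ and $u'\in U_<$, uses the $0/1$ structure of vertices of chain-order polytopes to produce points $\alpha,\delta$ on the wall $U_=$ with $\varphi_i(u)+\varphi_i(u')=\varphi_i(\alpha)+\varphi_i(\delta)$, concludes that the segment $[\varphi_i(u),\varphi_i(u')]$ lies in the image, and carries a vertex bijection through the induction on $i$. That vertex-level identity is what replaces your unproven concavity claim.
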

\begin{proof} 
By \Cref{lemma : chi_r bijection restricted polytopes}, for each $i \in \{0, \dots, i_{\max}(r)\}$ and $\ell \in \{1 - m_2, \dots, m_1 - 1 \}$, the hyperplane $H_\ell^C$ is invariant under $\varphi_i$. Hence, it is enough to prove that $\varphi_i$ gives a combinatorial mutation of the polytope $P_i := (\varphi_{i-1}\circ \dots \circ \varphi_0)(\mO_C(\lambda))$. 

We introduce the following notation for readability:
\[
A = (a-i-1, b-i-1),\ 
B = (a-i-1, b-i),\ 
C = (a-i, b-i-1),\ 
D = (a-i, b-i).
\]
Recall that $F_i = \conv\{e_B, e_C\}$. We may assume that both $B$ and $C$ lie in $\lambda$, otherwise $\varphi_i$ is a unimodular map and the result follows immediately. So, the regions of linearity of the tropical map $\varphi_i$ are given by
\[
U_\ge := \{x \in \RR^\lambda \mid x_B \ge x_C\} 
\text{ and }
U_\le := \{x \in \RR^\lambda \mid x_B \le x_C\}.
\]
Similarly, we define $U_>$ and $U_<$ to be the interiors of $U_\ge$ and $U_\le$ respectively. Let $U_= := U_\ge \cap U_\le$ be the intersection of the regions of linearity.

    
    
    
    

Let $p \in U_> \cap P_i$ and $q \in U_< \cap P_i$ be two points that lie in the interiors of the regions of linearity. To show that $P_{i+1} := \varphi_i(P_i)$ is convex, it suffices to show that the line segment between $\varphi_i(p)$ and $\varphi_i(q)$ is contained in $P_{i+1}$. Without loss of generality, we may assume that $p$ and $q$ are vertices of $P_i$. For each $i \in \{0, \dots, i_{\max}(r) \}$, we will show that the line segment $[\varphi_i(p), \varphi_i(q)]$ is contained in $P_{i+1}$. Moreover, we will show that $\varphi_i$ gives a bijection between the vertices of $P_i$ and $P_{i+1}$. 
We proceed by induction on $i$. 

\smallskip

Fix $i = 0$. Since $p_B > p_C$, $q_B > q_C$, and $p$ and $q$ are vertices of $\mO_C(\lambda)$, we have that
\[
\begin{cases}
p_A = p_C = 0\\
p_B = p_D = 1
\end{cases}
\quad \text{and} \quad 
\begin{cases}
q_A = q_B = 0\\
q_C = q_D = 1.
\end{cases}
\]
Note that, by the definition of the chain-order polytope, we have that $p_s = q_s = 0$ for all $s \le A$.
We define the points $\alpha, \beta, \gamma, \delta \in \RR^\lambda$ as follows:
\begin{itemize}
    \item $\alpha_s = 0$ for all $s \le D$ and $\alpha_s = \min\{p_s, q_s\}$ for all $s \nleq D$, in particular $\alpha_A = \alpha_B = \alpha_C = \alpha_D = 0$,
    
    \item $\beta_s = 0$ for all $s < D$ and $\beta_s = \min\{p_s, q_s \}$ for all $s \nless D$, in particular $\beta_A = \beta_B = \beta_C = 0$ and $\beta_D = 1$,
    
    \item $\gamma_s = 0$ for all $s \le A$ and $\gamma_s = \max\{p_s, q_s \}$ for all $s \nleq A$, in particular $\gamma_A = 0$ and $\gamma_B = \gamma_C = \gamma_D = 1$,
    
    \item $\delta_s = 0$ for all $s < A$, $\delta_A = 1$, and $\delta_s = \max\{p_s, q_s\}$ for all $s \nleq A$, in particular $\delta_A = \delta_B = \delta_C = \delta_D = 1$.
\end{itemize}
It is straightforward to check that the points $\alpha, \beta, \gamma$ and $\delta$ lie in $\mO_C(\lambda)$. Observe that $\alpha, \beta, \gamma, \delta \in U_=$ and $p + q = \beta + \gamma$. In particular, the line segment $[p,q]$ is not an edge of $\mO_C(\lambda)$. By the definition of the tropical map $\varphi_0$, we have that $\varphi_0(p) + \varphi_0(q) = \varphi_0(\alpha) + \varphi_0(\delta)$. To see this, note that $\varphi_0$ fixes all coordinates except $A$ and $D$, which vary based only on the values of $B$ and $C$. Writing out the coordinates $A, B, C, D$ of these points we have:
\begin{align*}
    \varphi_0(p) + \varphi(q) 
        &=  \varphi_0 \begin{pmatrix} 0 & 1 \\ 0 & 1 \end{pmatrix} + 
            \varphi_0 \begin{pmatrix} 0 & 0 \\ 1 & 1 \end{pmatrix} = 
            \begin{pmatrix} 0 & 1 \\ 0 & 1 \end{pmatrix} +
            \begin{pmatrix} 0 & 0 \\ 1 & 1 \end{pmatrix} =
            \begin{pmatrix} 0 & 1 \\ 1 & 2 \end{pmatrix} \\
        &=  \begin{pmatrix} 0 & 0 \\ 0 & 0 \end{pmatrix} + 
            \begin{pmatrix} 0 & 1 \\ 1 & 2 \end{pmatrix} =
            \varphi_0 \begin{pmatrix} 0 & 0 \\ 0 & 0 \end{pmatrix} + 
            \varphi_0 \begin{pmatrix} 1 & 1 \\ 1 & 1 \end{pmatrix} =
            \varphi_0(\alpha) + \varphi(\delta)
\end{align*}
where order of the coordinates is given by $\begin{pmatrix} A & B \\ C & D \end{pmatrix}$. For the remaining coordinates, the equality follows immediately from the definition of $\alpha$ and $\delta$ above. Therefore, the line segment $[\varphi_0(p), \varphi(q)]$ is contained in the convex hull $\conv\{\varphi_0(p), \varphi_0(q), \varphi_0(\alpha), \varphi_0(\delta)\}$. Hence $P_1$ is convex. Observe that the line segment $[\varphi_0(p), \varphi(q)]$ is not an edge of $P_1$. 

\smallskip

We now show that $\varphi_0$ gives a bijection between the vertices of $\mO_C(\lambda)$ and $P_1$. Clearly, $\varphi_0$ is a bijection between the vertices of $P_1$ and $\mO_C(\lambda)$ that lie in the interiors $U_>$ and $U_<$. Let $v$ be any vertex of $\mO_C(\lambda)$ that lies in $U_=$. Assume by contradiction that $\varphi_0(v)$ is not a vertex of $P_1$, then $\varphi_0(v)$ lies in strict interior of an edge of $P_1$. Therefore, there exist vertices $p \in P_1 \cap U_>$ and $q \in P_0 \cap U_<$ such that $\varphi_0(v)$ lies on the line segment between them, which is an edge of $P_1$. However, we have already observed that all such line segments are not edges of $P_1$, a contradiction. Therefore $\varphi_0(v)$ is a vertex of $P_0$.

Now, let $v$ be a non-vertex of $\mO_C(\lambda)$ and assume by contradiction that $\varphi_0(v)$ is a vertex of $P_1$. Since $\varphi_0(v)$ is a vertex, it follows that $v$ lies on an edge of $\mO_C(\lambda)$ that intersects the interiors of both regions of linearity $U_> $ and $U_<$. Hence, there exist vertices $p \in U_> \cap \mO_C(\lambda)$ and $q \in U_< \cap \mO_C(\lambda)$ such that $v \in [p, q]$. However, we have already observed that there are no such edges, a contradiction. Therefore, $\varphi_0(v)$ is not a vertex of $P_1$. And so we have shown that $v$ is a vertex of $\mO_C(\lambda)$ if and only if $\varphi_0(v)$ is a vertex of $P_0$.

\smallskip

The inductive step, for $i > 0$, follows almost identically to the case $i = 0$. The only difference is that, for each $x \in \RR^\lambda$, we do not work with the value $x_{(a-i, b-i)}$. Instead, we use the value $x_{(a-i, b-i)} + \min\{x_{(a-i+1, b-i)}, x_{(a-i, b-i+1)}\}$.

\end{proof}

\begin{proof}[\bf Proof of \Cref{thm : restricted order and chain mutation equivalent}]
Fix a maximal flag of up-sets of $\lambda$: $\emptyset = C_0 \subsetneq C_1 \subsetneq C_2 \subsetneq \dots \subsetneq C_u = \lambda$. Hence, for each $i \in \{1, \dots, u\}$, we have that $C_i \backslash C_{i-1}$ is a singleton $\{r\}$ such that $r$ is a corner of $\lambda \backslash C_{i-1}$. By \Cref{lemma : chi_r bijection restricted polytopes}, the map $\chi_r$ is a bijection between the restricted chain-order polytopes $\chi_r : \mO_{C_{i-1}}(\lambda)_{\underline d}^k \rightarrow \mO_{C_{i}}(\lambda)_{\underline d}^k$. By \Cref{proposition: Pak map as tropical maps} we have that $\chi_r = \psi \circ \varphi_{i_{\max}(r)} \circ \dots \circ \varphi_0$ is the composition tropical maps $\varphi_i$ and a unimodular map $\psi$. By \Cref{lemma: tropical map is a mutation}, we have that for each $j \in \{0, \dots, i_{\max}(r)\}$, the tropical map $\varphi_j$ gives a combinatorial mutation of the polytope $\varphi_{j-1} \circ \dots \circ \varphi_0(\mO_{C_{i-1}}(\lambda)_{\underline d}^k)$. So $\mO_{C_{i-1}}(\lambda)_{\underline d}^k$ and $\mO_{C_{i}}(\lambda)_{\underline d}^k$ are mutation equivalent for each $i$. Hence $\mO(\lambda)_{\underline d}^k = \mO_{C_0}(\lambda)_{\underline d}^k$ and $\mC(\lambda)_{\underline d}^k = \mO_{C_u}(\lambda)_{\underline d}^k$ are mutation equivalent.
\end{proof}

\begin{remark}
The proof of \Cref{lemma: tropical map is a mutation} shows that all chain-order polytopes $\mO_C(\lambda)$ have the same number of vertices. However, this does not hold for the restricted polytopes. See \cite[Examples~2.1 and 2.3]{andersson2022restricted}.
\end{remark}

\section{Observations about period collapse}\label{sec: computations}

In this section we focus on the phenomenon of period collapse, as studied in \cite{andersson2022restricted}. We provide some small computations, straightforward corollaries of our main results, and further questions. We begin with the following observation.

\begin{prop}[{\cite[Theorem~2.1 and Lemma~2.2]{andersson2022restricted}}]\label{prop: rectangular period collapse}
If $\lambda$ is rectangular then the Ehrhart quasi-polynomial of $\mO(\lambda)_{\underline d}^k$ is a polynomial.
\end{prop}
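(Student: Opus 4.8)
The plan is to read off the Ehrhart function from the lattice-point description and then recognize it as a stretched weight multiplicity that is already known to be polynomial. First I would record that, as a polytope, the $n$th dilate satisfies $n\cdot\mO(\lambda)^k_{\underline d}=\mO(\lambda)^{nk}_{n\underline d}$, since dilating scales the bound $k$ and each diagonal value $d_\ell$ by $n$ while leaving the monotonicity inequalities homogeneous. Hence
\[
L_{\mO(\lambda)^k_{\underline d}}(n)=\bigl|\,\mO(\lambda)^{nk}_{n\underline d}\cap\ZZ^\lambda\,\bigr|
\]
counts the integer \emph{monotone fillings} of the rectangle $\lambda=[m_1]\times[m_2]$ with all entries in $\{0,1,\dots,nk\}$, weakly increasing along rows and columns, subject to $\sum_{i-j=\ell}x_{(i,j)}=n\,d_\ell$ for every $\ell\in\diags(\lambda)$. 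The target is to prove that this counting function is a genuine polynomial in $n$, not merely a quasi-polynomial.

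Second, I would re-encode each filling by its diagonals. The entries on a fixed diagonal $\{(i,j)\mid i-j=\ell\}$, read from top-right to bottom-left, form a weakly increasing integer sequence (as $(i,j)\le(i+1,j+1)$ in $\lambda$), and the cover relations $(i,j)\le(i+1,j)$ and $(i,j)\le(i,j+1)$ translate exactly into an interlacing condition between the sequences attached to adjacent diagonals $\ell$ and $\ell+1$. Thus a monotone filling is precisely a Gelfand--Tsetlin-type pattern whose boundary data (longest diagonal, rectangular shape, and the bound $nk$) are integer-valued and scale linearly in $n$, and whose fixed diagonal sums $n\,d_\ell$ play the role of fixing the \emph{weight} of the pattern. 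In this dictionary $L_{\mO(\lambda)^k_{\underline d}}(n)$ is the number of lattice points in a fixed-weight slice of a bounded Gelfand--Tsetlin polytope, all of whose defining data are stretched by the single parameter $n$.

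Finally I would invoke polynomiality of such stretched counts: for Gelfand--Tsetlin polytopes the lattice-point count is polynomial by the result cited in the introduction (\cite{kirillov2001ubiquity}), and the content of the proposition is that this persists after fixing the weight (the diagonal-sum equations). The main obstacle is exactly this last point, since intersecting a polytope with rational hyperplanes generally destroys polynomiality of the Ehrhart function. I expect to overcome it by showing that the fractional behaviour that a weight-fixing slice could introduce never occurs here: the non-cover chain inequalities of the order polytope are redundant once the diagonal-sum equations and the cover (adjacent-box) inequalities are imposed, so the fibres obtained by integrating out one coordinate at a time have integer endpoints that are linear in $n$. This redundancy of the long-chain constraints is the analogue of \cite[Lemma~2.2]{andersson2022restricted} and is where the rectangular hypothesis on $\lambda$ is genuinely used; with it in hand the fibre-by-fibre count telescopes to a single polynomial in $n$.
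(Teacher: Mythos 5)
Your first two steps are sound and match the paper's route: the dilation identity $n\cdot\mO(\lambda)^k_{\underline d}=\mO(\lambda)^{nk}_{n\underline d}$ is correct, and reading a monotone filling of the rectangle along its diagonals is exactly how the paper identifies $\mO(\lambda)^k_{\underline d}$, up to integral equivalence, with a Gelfand--Tsetlin polytope $\GT_{\alpha,\beta}$ of shape $\alpha=(k^{(m_1)},0^{(m_2)})$ and explicit content $\beta$ determined by $k$ and $\underline d$. Note that rectangularity of $\lambda$ is used precisely in this identification, to make the padded diagonals into an interlacing triangular array with constant top row; it is not used where you claim it is.

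The gap is in your final step. The result cited by the paper (Kirillov's polynomiality, as packaged in the cited Theorem~2.1 and Lemma~2.2) is a statement about the \emph{weight-fixed} polytope $\GT_{\alpha,\beta}$: the lattice points of its $n$th dilate are the stretched Kostka numbers $K_{n\alpha,n\beta}$, and their polynomiality in $n$ is exactly the known theorem. So once the identification in your second step is carried out as an integral equivalence, there is nothing left to prove --- ``fixing the weight'' is not an additional difficulty but is already built into the cited statement. You instead treat the weight-fixing as the remaining open point and propose to settle it by an elementary argument: redundancy of non-cover inequalities plus fibres with integer endpoints that telescope to a polynomial. This does not hold up. The redundancy of long-chain inequalities holds for \emph{every} order polytope by transitivity of the cover relations, independently of the diagonal equations and of rectangularity, so it carries no content here. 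More seriously, $\mO(\lambda)^k_{\underline d}$ is genuinely non-integral --- the paper's \Cref{example: equal to 3x3 square} exhibits vertices with coordinates $\tfrac12$ and $\tfrac32$ even after reduction to a square --- so a coordinate-by-coordinate count with $\max/\min$ bounds yields a priori only a quasi-polynomial, and no telescoping with ``integer endpoints linear in $n$'' can be globally valid. Making such a count come out polynomial despite non-integral vertices is precisely the hard content of Kirillov's conjecture on stretched Kostka numbers, whose known proofs are substantially less elementary; a two-sentence fibre argument cannot replace it, and if it could, it would prove far more than this proposition.
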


\begin{proof}
The proof follows identically to \cite[Theorem~2.1 and Lemma~2.2]{andersson2022restricted} with a slight modification for the rectangular case. Explicitly, if $\lambda = [m_1] \times [m_2]$, then $\mO(\lambda)_{\underline d}^k$ is integrally equivalent to the Gelfand-Tsetlin polytope $\GT_{\alpha, \beta}$ of shape $\alpha$ and content $\beta$ where $\alpha = (k^{(m_1)}, 0^{(m_2)})$ and 
\begin{multline*}
\beta = (d_{m_1 - 1}, \,
d_{m_1 - 2} - d_{m_1 - 1}, \, d_{m_1 - 3} - d_{m_1 - 2}, \,
  \dots, \, d_{m_1 - m_2} - d_{m_1 - m_2 + 1}, \\
k + d_{m_1 - m_2 - 1} - d_{m_1 - m_2}, \, k + d_{m_1 - m_2 - 2} - d_{m_1 - m_2 - 1}, \,
  \dots, \, k + d_{-m_2 + 1} - d_{-m_2 + 2} 
).
\end{multline*}
The notation $k^{(m_1)}$ above means $k$ repeated $n$ times.
\end{proof}

By \Cref{prop: mutation equiv same Ehrhart}, combinatorial mutations preserve the Ehrhart (quasi)-polynomial, so we immediately obtain the following corollary of \Cref{thm : restricted order and chain mutation equivalent}.

\begin{corollary}
If $\lambda$ is a rectangular Young diagram then the Ehrhart quasi-polynomial of the restricted chain-order polytope $\mO_C(\lambda)_{\underline d}^k$ is a polynomial, for any $\underline d$, $k$ and up-set $C \subseteq \lambda$.
\end{corollary}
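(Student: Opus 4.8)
The plan is to combine the two main results stated earlier. The corollary asks only for the Ehrhart quasi-polynomial of the restricted chain-order polytope $\mO_C(\lambda)_{\underline d}^k$ to be an honest polynomial when $\lambda$ is rectangular, for an arbitrary up-set $C \subseteq \lambda$. The key observation is that \Cref{thm : restricted order and chain mutation equivalent} is proved by interpolating through \emph{all} the restricted chain-order polytopes along a maximal flag of up-sets, and the proof of that theorem shows that every polytope $\mO_C(\lambda)_{\underline d}^k$ appearing in such a flag is mutation equivalent to both endpoints $\mO(\lambda)_{\underline d}^k$ and $\mC(\lambda)_{\underline d}^k$.

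First I would observe that any up-set $C$ can be completed to a maximal flag of up-sets $\emptyset = C_0 \subsetneq C_1 \subsetneq \dots \subsetneq C_u = \lambda$ passing through $C$, exactly as in the proof of \Cref{thm : restricted order and chain mutation equivalent}: one repeatedly adjoins a corner of the complement $\lambda \backslash C_{i-1}$. Along this flag, \Cref{lemma : chi_r bijection restricted polytopes} and \Cref{lemma: tropical map is a mutation} guarantee that consecutive restricted chain-order polytopes are mutation equivalent. In particular, by transitivity of mutation equivalence, $\mO_C(\lambda)_{\underline d}^k$ is mutation equivalent to the restricted order polytope $\mO(\lambda)_{\underline d}^k = \mO_{C_0}(\lambda)_{\underline d}^k$.

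Next I would invoke \Cref{prop: rectangular period collapse}, which states that for rectangular $\lambda$ the Ehrhart quasi-polynomial of $\mO(\lambda)_{\underline d}^k$ is in fact a polynomial. Finally, by \Cref{prop: mutation equiv same Ehrhart}, mutation equivalent polytopes share the same Ehrhart series, and hence the same Ehrhart quasi-polynomial. Since this quasi-polynomial is a genuine polynomial for the restricted order polytope, the same is true for $\mO_C(\lambda)_{\underline d}^k$, completing the proof.

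There is essentially no hard obstacle here, since the corollary is a direct assembly of results already established in the excerpt. The only point requiring a moment of care is to confirm that an arbitrary up-set $C$ really does sit inside a maximal flag whose successive differences are corners of the relevant complements — but this is immediate, because from any up-set one can always remove a maximal element (equivalently, the complement of an up-set gains a corner) to obtain a smaller up-set, and dually one can always adjoin a corner of the complement. Thus the flag through $C$ exists in both directions, and the mutation-equivalence argument of the main theorem applies verbatim to the subchain passing through $C$.
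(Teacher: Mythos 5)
Your proposal is correct and is essentially the paper's own argument: the paper likewise obtains this corollary by combining \Cref{prop: rectangular period collapse} for the rectangular restricted order polytope, the mutation equivalences through the intermediate restricted chain-order polytopes established in the proof of \Cref{thm : restricted order and chain mutation equivalent} (via Lemmas~\ref{lemma : chi_r bijection restricted polytopes} and \ref{lemma: tropical map is a mutation}), and the invariance of the Ehrhart series under mutation from \Cref{prop: mutation equiv same Ehrhart}. One minor slip worth fixing: to descend a maximal flag of up-sets you remove a \emph{minimal} element of the up-set (so that its complement, a down-set, gains a corner), not a maximal one --- your parenthetical reformulation is the correct statement, and with it the existence of a maximal flag through any given up-set $C$ is indeed immediate.
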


We observe that some non-rectangular Young diagrams give rise to restricted chain-order polytopes with period collapse. For the next proposition, we require the following definition. Let $\lambda$ be a Young diagram and fix a pair of adjacent diagonals $\ell$ and $\ell+1$ where $\ell \ge 0$ (resp. $\ell$ and $\ell - 1$ where $\ell \le 0$). Assume the diagonals have the same length. We define \emph{the Young diagram $\lambda \backslash \ell$ with diagonal $\ell$ removed} as follows:
\[
\lambda \backslash \ell := 
\begin{cases}
\{(i,j) \in \lambda \mid i - j < \ell \} \cup
\{(i - 1,j) \mid (i,j) \in \lambda \text{ and } i - j > \ell \} & \text{if }  \ell \ge 0,\\
\{(i,j) \in \lambda \mid i - j > \ell \} \cup
\{(i,j-1) \mid (i,j) \in \lambda \text{ and } i - j < \ell \} & \text{resp. if } \ell \le 0.
\end{cases}
\]
Given a vector $\underline d$, as in \Cref{setup: restricted chain-order polytope} for the Young diagram $\lambda \subseteq [m_1] \times [m_2]$, we also define $\underline d \backslash \ell = (d_{m_1 - 1}, \dots, d_{\ell + 1}, d_{\ell - 1}, \dots, d_{1 - m_2})$. We think of $\underline d \backslash \ell$ as the corresponding vector for the Young diagram $\lambda     \backslash \ell$. See \Cref{example: equal to 3x3 square}.

\begin{prop}\label{prop: delete identical diagonals}
If $\lambda$ contains two adjacent diagonals of the same length, say $\ell$ and $\ell+1$ with $\ell \ge 0$ (resp. $\ell$ and $\ell - 1$ with $\ell \le 0$), and $d_\ell = d_{\ell+1}$ (resp. $d_{\ell} = d_{\ell - 1}$) then the polytopes $\mO(\lambda)_{\underline d}^k$ and $\mO(\lambda \backslash \ell)_{\underline d \backslash \ell}^k$ are integrally equivalent. Also if $d_{\ell+1} < d_\ell$ (resp. $d_{\ell - 1} < d_{\ell}$) then $\mO(\lambda)_{\underline d}^k$ is empty. 
\end{prop}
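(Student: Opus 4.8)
The plan is to exploit the special shape of the order-polytope inequalities along the two equal-length diagonals: together with the two diagonal-sum constraints they will force the two diagonals to carry \emph{identical} fillings, after which one of them is redundant and can be deleted. By transposing $\lambda$ (reflection across the main diagonal sends the box $(i,j)$ to $(j,i)$ and the diagonal $\ell$ to $-\ell$, carries $\mO(\lambda)^k_{\underline d}$ to the restricted order polytope of $\lambda^{T}$, and interchanges the two cases) I may assume throughout that $\ell \ge 0$. First I would record the interleaving. Since $\ell \ge 0$ and both diagonals are nonempty, the down-set property of $\lambda$ forces each of them to begin in the first column, so the $\ell$th diagonal consists of the boxes $A_t = (\ell+t,\,t)$ and the $(\ell+1)$st of the boxes $B_t = (\ell+1+t,\,t)$ for $t = 1,\dots,L$, where $L$ is their common length. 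Writing $a_t = x_{A_t}$ and $b_t = x_{B_t}$ for $x \in k\mO(\lambda)$, the box $B_t$ is the lower neighbour of $A_t$ and $B_{t-1}$ is the left neighbour of $A_t$, so the defining inequalities of $k\mO(\lambda)$ give $a_t \le b_t$ and $b_{t-1}\le a_t$; that is,
\[
a_1 \le b_1 \le a_2 \le b_2 \le \dots \le a_L \le b_L .
\]

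From this both assertions about $\underline d$ follow at once. Summing $a_t \le b_t$ over $t$ gives $d_\ell = \sum_t a_t \le \sum_t b_t = d_{\ell+1}$, so $\mO(\lambda)^k_{\underline d}$ is empty whenever $d_{\ell+1} < d_\ell$. When instead $d_\ell = d_{\ell+1}$, the same termwise inequalities $a_t \le b_t$ together with $\sum_t (b_t - a_t) = d_{\ell+1} - d_\ell = 0$ and $b_t - a_t \ge 0$ force $a_t = b_t$ for every $t$. Thus on $\mO(\lambda)^k_{\underline d}$ the filling of the $\ell$th diagonal is completely determined by that of the $(\ell+1)$st.

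It remains to build the integral equivalence. I would use the coordinate map $\pi\colon\RR^\lambda \to \RR^{\lambda\backslash\ell}$ that forgets the $\ell$th-diagonal coordinates and relabels each box $(i,j)$ with $i-j>\ell$ by $(i-1,j)$ — exactly the identification underlying the definition of $\lambda\backslash\ell$ (the equal-length hypothesis is what guarantees $\lambda\backslash\ell$ is again a Young diagram). Its inverse reinserts the deleted diagonal by duplicating the values of the $(\ell+1)$st diagonal, using the equality $a_t = b_t$ just established. Both maps send lattice points to lattice points, so once $\pi$ is shown to restrict to a bijection $\mO(\lambda)^k_{\underline d}\to\mO(\lambda\backslash\ell)^k_{\underline d\backslash\ell}$ the integral equivalence is immediate. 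The diagonal-sum constraints transport correctly because the shift carries the $(\ell+1)$st diagonal of $\lambda$ onto the new $\ell$th diagonal, whose prescribed sum in $\underline d\backslash\ell$ is $d_{\ell+1}=d_\ell$, while every other diagonal is merely relabeled.

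The main work — and the step I expect to be the genuine obstacle — is matching the order inequalities across the deletion, and this is precisely where the hypothesis is used. Relations among diagonals $<\ell$ and among diagonals $>\ell$ survive verbatim under the (un)shift. The only new adjacencies created by removing diagonal $\ell$ are those between the old diagonal $\ell-1$ and the old diagonal $\ell+1$: writing $E_t = (\ell-1+t,\,t)$ for the $(\ell-1)$st diagonal, after the shift $B_t$ sits directly below $E_t$ and directly left of $E_{t+1}$, the very positions previously occupied by $A_t$. One must check that the original relations $x_{E_t}\le a_t\le x_{E_{t+1}}$ (from $A_t$ being wedged between $E_t$ and $E_{t+1}$ in $\lambda$) persist as $x_{E_t}\le b_t\le x_{E_{t+1}}$ in $\lambda\backslash\ell$, which holds exactly because $a_t=b_t$. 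Conversely, starting from a point of $\mO(\lambda\backslash\ell)^k_{\underline d\backslash\ell}$ and duplicating, the reconstructed diagonal satisfies $a_t=b_t$ by construction, and the monotonicity $b_1\le\dots\le b_L$ along the new $\ell$th diagonal supplies the remaining relations $b_{t-1}\le a_t\le b_t$. Assembling these verifications shows $\pi$ and its inverse are mutually inverse lattice bijections between the two polytopes, yielding the claimed integral equivalence.
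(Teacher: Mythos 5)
Your proposal is correct and follows essentially the same route as the paper's proof: use the termwise order-polytope inequalities $x_{\alpha_i} \le x_{\beta_i}$ between corresponding boxes of the two diagonals, combine them with the equal diagonal sums to force the diagonals to carry identical values (and to get emptiness when $d_{\ell+1} < d_\ell$), and then conclude that deleting one diagonal is an integral equivalence. The only difference is one of detail: the paper asserts that the projection gives an integral equivalence "immediately," whereas you explicitly verify that the projection and the duplication map preserve all order inequalities and diagonal-sum constraints (and reduce the $\ell \le 0$ case by transposition rather than by "a similar argument"), which is a welcome, but not essentially different, elaboration.
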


\begin{proof}
Assume $\ell \ge 0$ and take any point $x \in \mO(\lambda)_{\underline d}^k$. Write $\alpha_1, \dots, \alpha_t$ for the boxes along the $\ell$th diagonal in $\lambda$ and $\beta_1, \dots, \beta_t$ for the coordinates along the $(\ell + 1)$th diagonal. By definition of the restricted order polytope, we have that $x_{\alpha_1} + \dots + x_{\alpha_t} = d_\ell = d_{\ell + 1} = x_{\beta_1} + \dots + x_{\beta_t}$ and for each $i \in [t]$ we have $x_{\alpha_i} \le x_{\beta_i}$. It follows immediately that $x_{\alpha_i} = x_{\beta_i}$ for all $i \in [t]$. Therefore the projection from $\mO(\lambda)_{\underline d}^k$ to $\mO(\lambda \backslash \ell)_{\underline d \backslash \ell}^k$ which removes the $\ell$th diagonal of $\lambda$ gives an integral equivalence of polytopes. 

Since $x_{\alpha_i} \le x_{\beta_i}$ for all $i \in [t]$, it follows that $d_\ell \le d_{\ell + 1}$. So, if $d_{\ell + 1} < d_{\ell}$ then we have that $\mO(\lambda)_{\underline d}^k$ is empty. A similar argument proves the result when $\ell \le 0$.
\end{proof}

So for any Young diagram $\lambda$, we may apply Propositions~\ref{prop: rectangular period collapse} and \ref{prop: delete identical diagonals} to construct restricted order polytopes that exhibit period collapse. We obtain the following corollary of \Cref{thm : restricted order and chain mutation equivalent}.

\begin{corollary}
Let $\underline d$ be the vector given by $(\underline d)_\ell = |\{\text{boxes of the } \ell \text{th diagonal of }\lambda\}|$. Then the Ehrhart quasi-polynomial of $\mO_C(\lambda)_{\underline d}^k$ is a polynomial.
\end{corollary}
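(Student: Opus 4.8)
The plan is to reduce, in two stages, to the rectangular case already handled by \Cref{prop: rectangular period collapse}. First I would eliminate the up-set $C$: extending a flag $\emptyset = C_0 \subsetneq C_1 \subsetneq \dots \subsetneq C_t = C$ to a maximal flag of up-sets and invoking Lemmas~\ref{lemma : chi_r bijection restricted polytopes} and~\ref{lemma: tropical map is a mutation} exactly as in the proof of \Cref{thm : restricted order and chain mutation equivalent}, the restricted chain-order polytope $\mO_C(\lambda)^k_{\underline d}$ is mutation equivalent to the restricted order polytope $\mO(\lambda)^k_{\underline d} = \mO_{C_0}(\lambda)^k_{\underline d}$. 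By \Cref{prop: mutation equiv same Ehrhart} these two polytopes share an Ehrhart series, hence the same Ehrhart quasi-polynomial, so it suffices to treat $\mO(\lambda)^k_{\underline d}$ for the specific vector $\underline d$ whose $\ell$th entry is the number $c_\ell$ of boxes on the $\ell$th diagonal of $\lambda$.

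The second stage is to iterate \Cref{prop: delete identical diagonals} down to a square. The key combinatorial input I would establish is that the two ``arms'' of the length sequence $(c_\ell)$ are weakly decreasing away from the main diagonal: for $\ell \ge 0$ the assignment $(i,j) \mapsto (i-1,j)$ injects the $(\ell+1)$st diagonal into the $\ell$th diagonal (since $\lambda_{i-1} \ge \lambda_i \ge j$), giving $c_{\ell+1} \le c_\ell$, and transposing yields $c_{\ell-1} \le c_\ell$ for $\ell \le 0$. Consequently, whenever two adjacent diagonals have equal length we land in the integral-equivalence case of \Cref{prop: delete identical diagonals} rather than the emptiness case, because $d_\ell = c_\ell = c_{\ell+1} = d_{\ell+1}$ (resp.\ $d_\ell = d_{\ell-1}$) rules out $d_{\ell+1} < d_\ell$. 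Deleting such a diagonal removes exactly the repeated entry $c_\ell$ from the length sequence, so the restriction vector $\underline d \backslash \ell$ again equals the diagonal-length vector of $\lambda \backslash \ell$; thus the hypothesis of the corollary is preserved and the reduction may be repeated.

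Since each deletion strictly decreases $\lvert \lambda \rvert$, the process terminates at a Young diagram $\mu$ in which no two adjacent diagonals (meaning $\ell$ and $\ell+1$ for $\ell \ge 0$, or $\ell$ and $\ell-1$ for $\ell \le 0$) have equal length; as each arm is weakly decreasing, both arms are then strictly decreasing down to their end value $1$. Writing $n = c_0$ and $(m_1,m_2)$ for the bounding box of $\mu$, the strictly decreasing positive arm $c_0 > c_1 > \dots > c_{m_1-1} = 1$ has $m_1$ entries and forces $n \ge m_1$; combined with $n = c_0 \le \min(m_1,m_2)$ this gives $n = m_1 \le m_2$, and symmetrically $n = m_2$. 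Hence $m_1 = m_2 = n$, every diagonal of $\mu$ matches the corresponding diagonal of $[n]\times[n]$, and $\lvert \mu \rvert = \sum_\ell c_\ell = n^2$; since $\mu \subseteq [n]\times[n]$ with equal cardinality, $\mu = [n]\times[n]$ is a square.

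Finally, I would apply \Cref{prop: rectangular period collapse} to the square $\mu$ to conclude that the Ehrhart quasi-polynomial of its restricted order polytope is a polynomial; because the reductions are integral equivalences, this quasi-polynomial coincides with that of $\mO(\lambda)^k_{\underline d}$, completing the argument. I expect the main obstacle to lie in the second stage: namely, confirming that the diagonal-length hypothesis is genuinely inherited by $\mO(\lambda\backslash\ell)^k_{\underline d \backslash \ell}$ under each application of \Cref{prop: delete identical diagonals} (in particular that the reindexing implicit in $\underline d \backslash \ell$ is compatible with the shifted diagonals of $\lambda \backslash \ell$), and in showing that the terminal fully-reduced shape is forced to be a square rather than merely some shape with strictly decreasing arms.
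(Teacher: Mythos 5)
Your proposal is correct and follows exactly the route the paper intends for this corollary: reduce $\mO_C(\lambda)_{\underline d}^k$ to $\mO(\lambda)_{\underline d}^k$ via the mutation equivalence of \Cref{thm : restricted order and chain mutation equivalent} together with \Cref{prop: mutation equiv same Ehrhart}, then repeatedly apply \Cref{prop: delete identical diagonals} (always landing in the integral-equivalence case, since $\underline d$ records the diagonal lengths) until the shape collapses to a square, and conclude with \Cref{prop: rectangular period collapse}. Your checks that the diagonal-length hypothesis is inherited by $\underline d \backslash \ell$ and that the fully reduced shape must be a square are details the paper leaves implicit, but the argument is the same.
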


\begin{remark}
We note that \Cref{prop: rectangular period collapse} does not immediately generalise to the non-rectangular cases. Unless the vector $\underline d$ contains repeated entries, as in \Cref{prop: delete identical diagonals}, it is not clear whether restricted order polytopes $\mO(\lambda)_{\underline d}^k$ for non-rectangular posets $\lambda$ are related to the Gelfand-Tsetlin polytope $\GT_{\alpha, \beta}$. 
Restricted order polytopes can be realised as the intersection of a GT-polytope with certain hyperplanes. However, it is not clear why such an intersection would give a polytope with period collapse.
\end{remark}

\begin{example}\label{example: equal to 3x3 square}
Consider the partition of $11$ given by $(4,4,3)$ and let $\lambda$ be the corresponding Young diagram. Let $k = 2$ and $\underline d = (1,2,3,2,2,1)$. Then the vertices of restricted order $\mO(\lambda)_{\underline d}^k$ are
\[
\begin{ytableau}
1 & *(cyan!50) 1 & *(purple!50) 1 & 1 \\
1 & 1 & *(cyan!50) 1 & *(purple!50) 1 \\
1 & 1 & 1
\end{ytableau}\, , \quad
\begin{ytableau}
0 & *(cyan!50) 1 & *(purple!50) 1 & 1 \\
1 & 1 & *(cyan!50) 1 & *(purple!50) 1 \\
1 & 1 & 2
\end{ytableau}\, , \quad
\begin{ytableau}
0 & *(cyan!50) 1 & *(purple!50) 1 & 1 \\
0 & 1 & *(cyan!50) 1 & *(purple!50) 1 \\
1 & 2 & 2
\end{ytableau}\, , \quad
\begin{ytableau}
0 & *(cyan!50) 0 & *(purple!50) 0 & 1 \\
1 & 1 & *(cyan!50) 2 & *(purple!50) 2 \\
1 & 1 & 2
\end{ytableau}\, , \quad
\begin{ytableau}
0 & *(cyan!50) 0 & *(purple!50) 0 & 1 \\
0 & 1 & *(cyan!50) 2 & *(purple!50) 2 \\
1 & 2 & 2
\end{ytableau}\, ,
\]
\[
\begin{ytableau}
0 & *(cyan!50) \frac 12 & *(purple!50) \frac 12 & 1 \\
\frac 12 & \frac 32 & *(cyan!50) \frac 32 & *(purple!50) \frac 32 \\
1 & \frac 32 & \frac 32
\end{ytableau}\, , \quad \text{ and } \quad
\begin{ytableau}
\frac 12 & *(cyan!50) \frac 12 & *(purple!50) \frac 12 & 1 \\
\frac 12 & \frac 12 & *(cyan!50) \frac 32 & *(purple!50) \frac 32 \\
1 & \frac 32 & 2
\end{ytableau}\, .
\]
The shaded adjacent diagonals $\ell = -1$ (in blue) and $\ell -1 = -2$ (in purple) have a same values since $d_{-1} = d_{-2} = 2$. Let $\lambda' = \lambda \backslash \ell$ be the square Young diagram given by the partition $(3,3,3)$ and $\underline d' = \underline d \backslash \ell = (1,2,3,2,1)$. By \Cref{prop: delete identical diagonals}, the vertices of the restricted order polytope $\mO(\lambda')_{\underline d'}^k$ can be obtained from the above vertices by applying the projection:
\[
\begin{ytableau}
a & *(cyan!50) & *(purple!50) b & c\\
d & e & *(cyan!50) & *(purple!50) f \\
g & h & i
\end{ytableau} 
\, \longrightarrow \,
\begin{ytableau}
a & *(purple!50) b & c\\
d & e & *(purple!50) f \\
g & h & i
\end{ytableau}\, .
\]
This projection gives an integral equivalence between the two restricted order polytopes. The Ehrhart polynomial of this polytope is given by:
\[
\frac {1}{12} t^4 + \frac 12 t^3 + \frac{17}{12}t^2 + 2t + 1
\]
and its $h^*$-vector is $(1,0,1,0,0)$.
\end{example}

\begin{example}\label{example: first non-square case}
We give an example of a restricted order polytope for some non-rectangular Young diagram $\lambda$ that does not satisfy the assumptions Propositions~\ref{prop: mutation equiv same Ehrhart} or \ref{prop: delete identical diagonals}. Consider the partition $\lambda = (4,4,3)$, $k = 3$ and $\underline d = (1,2,3,2,3,1)$. We index the coordinates of $\RR^\lambda$ as follows:
\[
\begin{ytableau}
1 & 2 & 3 & 4 \\
5 & 6 & 7 & 8 \\
9 & 10 & 11
\end{ytableau}\, \text{ and let } V = 
\left(
\begin{smallmatrix}
        1&0&0&0&0&0&0&0&0&0&0\\
        1&1&0&0&1&0&0&0&0&\frac{1}{2}&\frac{1}{2}\\
        1&1&1&0&1&1&0&1&0&1&\frac{1}{2}\\
        1&1&1&1&1&1&1&1&1&1&1\\
        1&1&1&1&0&0&0&0&0&\frac{1}{2}&\frac{1}{2}\\
        1&1&1&1&1&1&1&0&0&\frac{3}{2}&\frac{3}{2}\\
        1&1&2&2&1&2&2&2&2&\frac{3}{2}&\frac{3}{2}\\
        2&2&2&3&2&2&3&2&3&2&\frac{5}{2}\\
        1&1&1&1&1&1&1&1&1&1&1\\
        1&1&1&1&2&2&2&2&2&\frac{3}{2}&\frac{3}{2}\\
        1&2&2&2&2&2&2&3&3&\frac{3}{2}&\frac{3}{2}
\end{smallmatrix}
\right)
\in (\RR^{\lambda})^{11}
\]
The vertices of the restricted chain-order polytope $\mO(\lambda)_{\underline d}^k$ are the columns of $V$. The Ehrhart quasi-polynomial of this polytope is the polynomial given by
\[
\frac{7}{120}t^{5}+\frac{1}{2}t^{4}+\frac{41}{24}t^{3}+3t^{2}+\frac{41}{15}t+1.
\]
The $h^*$-vector for this polytope is $(1,3,3,0,0,0)$.
\end{example}

\begin{example}\label{example: larger non-square case}
Let $\lambda$ be the Young diagram associated to the partition $(4,4,3,2)$. Let $k = 3$ and $\underline d = (1,3,2,3,2,3,1)$. Label the coordinates of $\RR^\lambda$ as follows:
\[
\begin{ytableau}
1 & 2 & 3 & 4 \\
5 & 6 & 7 & 8 \\
9 & 10 & 11 \\
12 & 13
\end{ytableau}\, \text{ and let } V = \left(\begin{smallmatrix}
       1&0&0&0&0&0&0&0&0&0&0&0&0&0&0&0&0&0\\
       1&1&0&0&1&0&0&0&0&1&0&0&0&0&\frac{1}{2}&\frac{1}{2}&\frac{1}{2}&\frac{1}{2}\\
       1&1&1&0&1&1&0&1&0&1&1&0&1&0&1&\frac{1}{2}&1&\frac{1}{2}\\
       1&1&1&1&1&1&1&1&1&1&1&1&1&1&1&1&1&1\\
       1&1&1&1&0&0&0&0&0&0&0&0&0&0&\frac{1}{2}&\frac{1}{2}&\frac{1}{2}&\frac{1}{2}\\
       1&1&1&1&1&1&1&0&0&1&1&1&0&0&\frac{3}{2}&\frac{3}{2}&\frac{3}{2}&\frac{3}{2}\\
       1&1&2&2&1&2&2&2&2&1&2&2&2&2&\frac{3}{2}&\frac{3}{2}&\frac{3}{2}&\frac{3}{2}\\
       2&2&2&3&2&2&3&2&3&2&2&3&2&3&2&\frac{5}{2}&2&\frac{5}{2}\\
       1&1&1&1&1&1&1&1&1&0&0&0&0&0&1&1&\frac{1}{2}&\frac{1}{2}\\
       1&1&1&1&2&2&2&2&2&2&2&2&2&2&\frac{3}{2}&\frac{3}{2}&\frac{3}{2}&\frac{3}{2}\\
       1&2&2&2&2&2&2&3&3&2&2&2&3&3&\frac{3}{2}&\frac{3}{2}&\frac{3}{2}&\frac{3}{2}\\
       1&1&1&1&1&1&1&1&1&1&1&1&1&1&1&1&1&1\\
       2&2&2&2&2&2&2&2&2&3&3&3&3&3&2&2&\frac{5}{2}&\frac{5}{2}
\end{smallmatrix}\right)
\in (\RR^\lambda)^{18}
\]
The restricted order polytope $\mO(\lambda)_{\underline d}^k$ has $18$ vertices which are given by the columns of $V$.
Similarly, to the previous examples, the polytope exhibits period collapse. Its Ehrhart polynomial is
\[
\frac{7}{240}t^{6}+\frac{77}{240}t^{5}+\frac{23}{16}t^{4}+\frac{163}{48}t^{3}+\frac{68}{15}t^{2}+\frac{197}{60}t+1
\]
and its $h^*$-vector is $(1,7,11,2,0,0,0)$.
\end{example}

\begin{question}
Is the Ehrhart quasi-polynomial of the restricted chain-order polytope $\mO_C(\lambda)_{\underline d}^k$ a polynomial for any up-set $C \subseteq \lambda$, vector $\underline d$ and positive integer $k$?
\end{question}

By Theorem~\ref{thm : restricted order and chain mutation equivalent}, it suffices to consider only the restricted order polytopes. Also, using the combinatorial mutations defined in \Cref{sec: main results}, the question is simultaneously answered for all other intermediate polytopes that appear in the sequence of combinatorial mutations between restricted chain-order polytopes. 

\begin{question}
What is the degree of the $h^*$-polynomial of a restricted chain-order polytope?
\end{question}

The examples above seem to suggest that the degree of the $h^*$-polynomial is bounded above by half the dimension of the polytope. Given a Young diagram with $n$ boxes and $\ell$ diagonals, for sufficiently generic $k$ and $\underline d$, the dimension of the the restricted order polytope $\mO(\lambda)_{\underline d}^k$ is $n - \ell$.

\begin{question}
Which non-lattice restricted order polytopes are mutation equivalent to lattice polytopes?
\end{question}

Consider the restricted order polytope in \Cref{example: equal to 3x3 square}. It turns out that all intermediate polytopes, which appear in the sequence of mutations constructed in \Cref{sec: main results}, are non-lattice polytopes. For other non-lattice restricted order polytopes, we ask whether any of the intermediate polytopes are lattice polytopes. We note that if a restricted order polytope is mutation equivalent to a lattice polytope, then this gives an alternative proof that the Ehrhart quasi-polynomial is a polynomial.

\bibliographystyle{abbrv} 
\bibliography{bibfile.bib}

\noindent 
\textbf{Author's addresses.}

\noindent 
Department of Pure and Applied Mathematics, 
Osaka University, Suita, Osaka 565-0871, Japan\\
E-mail address: {\tt oliver.clarke.crgs@gmail.com}

\medskip\noindent
Department of Pure and Applied Mathematics,
Osaka University, Suita, Osaka 565-0871, Japan\\
E-mail address:  {\tt higashitani@ist.osaka-u.ac.jp}

\medskip  \noindent
Department of Mathematics, KU Leuven, Celestijnenlaan 200B, B-3001 Leuven, Belgium
\\ E-mail address: {\tt francesca.zaffalon@kuleuven.be}

\end{document}